\newtheorem{theorem}{Theorem}[section]
\newtheorem{corollary}[theorem]{Corollary}
\newtheorem{definition}{Definition}
\newtheorem{example}[theorem]{Example}
\newtheorem{proposition}[theorem]{Proposition}
\newtheorem{remark}[theorem]{Remark}
\newtheorem{lemma}[theorem]{Lemma}
\begin{document}

	\author[Sang-Eon Han]{Sang-Eon~Han}
	
	\title[]{Remarks on the digital-topological $k$-group structures and the development of the $AP_1$-$k$- and $AP_1^\ast$-$k$-group}

	\maketitle
	
	\begin{center}
		{ Department of Mathematics Education,
			Jeonbuk National University, Republic of Korea.\\
			E-mail address: sehan@jbnu.ac.kr, sehan202058@gmail.com} 
		
	\end{center}

	\begin{abstract} In the literature of a digital-topological ($DT$-, for brevity) group structure on a digital image $(X,k)$, roughly saying, two kinds of methods are shown. Given a digital image $(X,k)$, the first one, named by a $DT$-$k$-group, was established in 2022 \cite{H10} by using both the $G_{k^\ast}$- or $C_{k^\ast}$-adjacency \cite{H10} for the Cartesian product $X^2:=X \times X$  and the $(G_{k^\ast},k)$- or $(C_{k^\ast},k)$-continuity for the multiplication $\alpha:X^2 \to X$ \cite{H10}.
		 The second one with the name of $NP_i$-$DT$-groups, $i \in \{1,2\}$, was 
		discussed in 2023 \cite{LS1} by using the $NP_i(k,k)$-adjacency for $X^2$ in \cite{B1} and the $(NP_i(k,k), k)$-continuities of the multiplication $\alpha:X^2 \to X$, $i\in \{1,2\}$. However, due to some ambiguity of the $NP_u(k_1,k_2, \cdots, k_v)$-adjacency in \cite{B1,B2}, the $AP_u(k_1,k_2, \cdots, k_v)$-adjacency was recently developed as an improvement of the $NP_u(k_1,k_2, \cdots, k_v)$-adjacency (see Section 4). Besides, we also develop an $AP_u^\ast(k_1,k_2, \cdots, k_v)$-adjacency. For a digital image $(X, k)$, in case an $AP_1(k,k)$-adjacency on $X^2$ exists, we formulate both  an $AP_1$-$k$- and an $AP_1^\ast$-$k$-group. Finally, we show that an $AP_1^\ast$-$k$-group is equivalent to a Han's $DT$-$k$-group in \cite{H10}. 
	\end{abstract}
	
		\thanks{2010 Mathematics Subject Classification: 22A05,22A10,54C08, 54H11,68U10\\
		Keywords: Digital-topological ($DT$-, for brevity) $k$-group, digital $k$-adjacency, $C$-compatible $k$-adjacency, $G_{k^\ast}$-adjacency, $AP_u(k_1, \cdots, k_v)$-adjacency, $AP_u^\ast(k_1, \cdots, k_v)$-adjacency, $AP_1$-$k$-group, $AP_1^\ast$-$k$-group.}

	
	\maketitle
		\newpage
	
	\section{\bf Introduction}\label{s1}
	For two integers $c,d \in {\mathbb Z}$: the set of integers, we will follow the term 
	$[c,d]_{\mathbb Z}=\{t \in {\mathbb Z}\,\vert\, c\leq t \leq d\}$. 
	Besides, for a set $X$, we will be using the notation $\sharp X$ to indicate the cardinal number of $X$ and the notation $\lq\lq$$:=$" (resp. $\Leftrightarrow$) will be used for introducing a new term (resp. if and only if). Besides, much of the material that appears in the present paper is often quoted or paraphrased from \cite{B1,B2,H1,H2,H6,H7,H10,H11,H12,LS1}.\\
	
	Recently, it turns out that each of the $n$-dimensional Khalimsky topological space \cite{KKM1} and the Marcus-Wyse topological space \cite{MW1} is not a topological group \cite{H11}.
	Hence the study of the existence of a digital-topological ($DT$-, for simplicity) group structure on a digital image $(X,k)$ is meaningful from the viewpoint of digital topology.
	According to the literature of a $DT$-group structure on a digital image $(X,k)$, two kinds of approaches are shown in \cite{H10,LS1}. Precisely, given a digital image $(X, k)$, in order to formulate a $DT$-group structure on $(X,k)$, we need some special kinds of adjacencies or relations on the product $X^2:=X\times X$ and further, some continuities for the multiplication $\alpha$ of (1.1) below (in detail, see Sections 5 and 6 in the present paper)  
	$$\alpha: X^2 \to X. \eqno(1.1)$$
   Then, to support some special kinds of continuities of the multiplication $\alpha$ of (1.1), $(G_{k^\ast}, k)$-continuity \cite{H10} and $(NP_u(k,k),k)$-continuity \cite{LS1} were considered (see Sections 5 and 6). After comparing the two approaches (or ideas) with each other, we observe that each of them has its own feature. However, the recent paper \cite{H12} pointed out some lacks of clarity on the $NP_u(k_1,k_2, \cdots, k_v)$-adjacency discussed in \cite{B1,B2}, $u \in [1,v]_{\mathbb Z}$ (see Section 4).
       Hence the paper \cite{H12} developed an $AP_u(k_1,k_2, \cdots, k_v)$-adjacency as an improvement of the $NP_u(k_1,k_2, \cdots, k_v)$-adjacency in \cite{B1,B2}, $u \in [1,v]_{\mathbb Z}$ (see Sections 5 and 6).
   Based on both an $AP_u(k,k)$- and an $AP_u^\ast(k,k)$-adjacency for the product $X^2$, the paper develops both an $AP_1$-$k$-group and an $AP_1^\ast$-$k$-group on a digital image $(X,k)$ (see Section 6).\\
   Finally, we obtain the following:\\
   $\bullet$ An equivalence between an $AP_1^\ast$-$k$-group  and a Han's $DT$-$k$-group in \cite{H10} using both the $C_{k^\ast}$-adjacency on the product $X^2$ and the $(C_{k^\ast}, k)$-continuity for the multiplication $\alpha_1^\prime:(X^2, C_{k^\ast}) \to (X,k)$  (see Section 6). \\   
    $\bullet$ Development of an $AP_1$-$k$-group and an $AP_1^\ast$-$k$-group  and their comparison (see Sections 5 and 6). \\    
    $\bullet$ The $DT$-$k$-group \cite{H10} based on both the $G_{k^\ast}$-adjacency on the product $X^2$ and the $(G_{k^\ast}, k)$-continuity for the multiplication for a $DT$-$k$-group is proved to be the most generalized $DT$-$k$-group in the literature of the study of a $DT$-group structure on a digital image $(X,k)$ (see Sections 5 and 6). 
    Then we strongly note that the pair  $(X^2, G_{k^\ast})$ is a relation set \cite{H10} that need not be a digital image with the typical adjacencies in the digital setting. However, we note that each of the pairs  $(X^2, AP_1(k,k))$  and $(X^2, AP_1^\ast(k,k))$ (see Section 6) is a digital image because each of $AP_1(k,k)$ and $AP_1^\ast(k,k)$ is one of the typical adjacencies of (2.1).\\
    
The paper is organized as follows:
Section 2 refers to some terminology for further studies in the paper. 
Section 3 mentions some utilities of a $C_{k^\ast}$- and a $G_{k^\ast}$-adjacency for formulating a $DT$-$k$-group. Besides, it refers to some reason why we take the number (subscript) $k^\ast$ of both $C_{k^\ast}$- and $G_{k^\ast}$-adjacency to establish a $DT$-$k$-group on a digital image $(X,k)$.
Section 4 discusses about some unclearness of the $NP_u(k_1,k_2, \cdots, k_v)$-adjacency in \cite{B1,B2} and referes to the $AP_u(k_1,k_2, \cdots, k_v)$-adjacency as an improvement of the $NP_u(k_1, \cdots, k_v)$-adjacency in \cite{B1,B2}.
Section 5 compares the two $DT$-group structures in \cite{H10} and \cite{LS1} and suggests some ideas for improving the $DT$-group structure in \cite{LS1}.	
Section 6 proposes both an $AP_1$-$k$- and an $AP_1^\ast$-$k$-group structure. In particular,  we show that the  $AP_2(k,k)$-adjacency on the product $X^2$ is very restrictive to establish a $DT$-group structure on $(X,k)$. For instance, even $({\mathbb Z}, 2)$ cannot be an $AP_2$-$2$-group (see the proof of Remark 6.11) because the $AP_2(2,2)$-adjacency on ${\mathbb Z}^2$ is the normal $8$-adjacency on ${\mathbb Z}^2$.  Besides, an $AP_1$-$k$-group (resp. $AP_1^\ast$-$k$-group) will be proved to be a generalization of (resp. be equivalent to) a Han's $DT$-$k$-group using a $C_{k^\ast}$-adjacency on $X^2$ (compare the map $\alpha_1^\prime$ of (5.1) and the map $\alpha_3$ of (6.2)) (see Section 6). In addition, comparison among several types of $DT$-group structures mentioned earlier will be done.
Finally, we investigate some properties of an $AP_1$-$k$-group.

\section{\bf Some terminology }\label{s2}

In order to effectively deal with a set $X \subset {\mathbb Z}^n$ from the viewpoint of digital topology, 
$k$-adjacencies of  ${\mathbb Z}^n, n \in {\mathbb N}$: the set of positive integers, have been formulated, as stated in (2.1) below.
The typical $k$-adjacencies of ${\mathbb Z}^n, n \in [1,3]_{\mathbb Z}$, as low dimensional cases, were initially developed by Rosenfeld \cite{R1,R2,R3}.
As a generalization of them, for ${\mathbb Z}^n, n \in {\mathbb N}$, the papers \cite{H1,H2} initially (in detail see \cite{H8}) established the following: for $t \in [1,n]_{\mathbb Z}$,
the distinct points $p = (p_i)_{i \in [1,n]_{\mathbb Z}}$ and $q = (q_i)_{i \in [1,n]_{\mathbb Z}}$  in  ${\mathbb Z}^n$
are said to be $k(t, n)$-adjacent if at most $t$ of their coordinates  differ by $\pm1$ and the others coincide. Then, in the present paper, we may write $(p,q) \in k(t,n)$ because the $k(t,n)$-adjacency is a kind of relation on ${\mathbb Z}^n$. i.e., 
the pair $(X, k(t,n))$ can be assumed to be a relation set, as usual. Hereinafter, we will often write $k:=k(t,n)$ if there is no danger of confusion as in (2.1).

According to the above mentioned criterion for establishing the $k(t, n)$-adjacency, the $k(t, n)$-adjacencies on ${\mathbb Z}^n, n \in {\mathbb N}$, were formulated \cite{H2,H8} as follows:
$$k:=k(t, n)=\sum_{i=1}^{t} 2^{i}C_{i}^n, \text{where}\,\, C_i ^n= {n!\over (n-i)!\ i!}. \eqno(2.1)$$
For instance \cite{H1,H2,H9},
$$(n, t, k) \in \left \{
\aligned
& (1,1,2),\\
& (2,1,4), (2,2,8),\\
& (3,1,6), (3,2,18), (3,3,26),  \\
&  (4, 1, 8), (4, 2, 32), (4, 3, 64), (4, 4, 80),   \\
&  (5, 1, 10),  (5, 2, 50), (5, 3, 130), (5, 4, 210), (5, 5, 242),\, \,\text{and}  \\
&  (6, 1, 12),  (6, 2, 72),  (6, 3, 232), (6, 4, 472), (6, 5, 664), (6, 6, 728). 
\endaligned
\right\} \eqno(2.2)
$$
For a set $X \subset {\mathbb Z}^n, n \in {\mathbb N}$, with one of the $k$-adjacencies of (2.1), hereinafter, we call $(X, k)$ a digital image (or digital object), as usual.
We clearly note that the $c_u$-adjacencies in \cite{B1} for $X \subset {\mathbb Z}^n, n \in {\mathbb N}$, and $u \in [1,u]_{\mathbb Z}$, are equal to those of the digital adjacencies of (2.1), i.e., $\lq\lq$$c_u$-adjacency for $X \subset {\mathbb Z}^n$" is equal to the $\lq\lq$$k:=k(t,n)$-adjacency of (2.1), where $t=u$".

Given $(X, k)$, it is said to be  $k$-connected (or $k$-path connected)
if, for any two points $x, y \in X$, there exists a finite sequence (or a $k$-path) $(x_i)_{i \in [0, n]_{\mathbb Z}} \subset X$ such that $x_0=x$, $x_n=y$, and $x_i$ is  $k$-adjacent to $x_{i+1}$ or  $x_i=x_{i+1}$, $i \in [0,n-1]_{\mathbb Z}$ \cite{KR1}.
In this regard, a digital image  $(X, k)$ with a singleton is obviously $k$-connected for any $k$-adjacency. 
Besides, a simple closed $k$-curve (or simple $k$-cycle, for brevity) with $l$ elements on ${\mathbb Z}^n$, denoted by $SC_k^{n,l}, 4 \leq l \in {\mathbb N}$ \cite{H1,H2,H3,KR1},
was initially defined as a sequence $(s_i)_{i \in [0, l-1]_{\mathbb Z}}$ on ${\mathbb Z}^n$, where
$s_i$ and $s_j$ are $k$-adjacent if and only if $\vert i-j\vert=\pm1(mod\, l)$ (or $i=(j\pm1) (mod\, l))$ \cite{H2,KR1}.  
Indeed, owing to the notion of $SC_k^{n,l}$ above, it is clear that a circular ordering \cite{Ko1} on $SC_k^{n,l}$ is assumed because $SC_k^{n,l}$ is a sequence as stated earlier, i.e., for any $i \in [0,l-1]_{\mathbb Z}, x_i \preccurlyeq x_{i+1} (mod\, l)$. 
To be specific, consider the order preserving map 
$f: ({\mathbb N}\cup \{0\}, \leq) \to (SC_k^{n,l}=(x_i)_{i \in [0, l-1]_{\mathbb Z}}, \preccurlyeq)$ defined by $f(i)=x_{i (mod\, l)}$. Then it is obvious that  $(SC_k^{n,l}, \preccurlyeq)$ is a cyclic ordered set.\\

\begin{remark} \cite{H10} $(SC_k^{n,l}, \ast)$ is an abelian group, where for two elements $x_i$ and $x_j$ in $SC_k^{n,l}$, the operation
	$x_i \ast x_j$ is defined as $x_{i+j(mod\,l)}$.	
	\end{remark}

A digital $k$-neighborhood of the point $x_0$ with radius $1$ \cite{H1,H2} was defined as follows:
$$N_k(x_0, 1)=\{x \in X \,\vert\,\,x\,\,\text{is}\,\,k\text{-adjacent to}\,\,x_0\} \cup \{x_0\} \subset (X,k). $$

However, for convenience, in the present paper, given a digital image $(X,k)$, we will be taking the notations
$$N_k(x_0):=N_k(x_0, 1)\,\,\text{and}\,\, \\
N_k^\ast(x_0):=N_k(x_0) \setminus \{x_0\}. \eqno(2.3) $$

These $k$-neighborhoods of (2.3) have effectively been used to formulate many types of adjacencies in digital topology (see Propositions 2.3 and 2.5, (2.8), (3.2), (4.3), and so on).
Also, the digital continuity established by Rosenfeld \cite{R3} can also be represented in terms of the digital $k$-neighborhood of (2.3) (see (2.4) below). More precisely, let us first recall that the paper \cite{R3} initially characterized 
the $(k_0, k_1)$-continuity of a map $f:(X,k_0) \to (Y,k_1)$ as follows:
A function $f:(X,k_0) \to (Y,k_1)$ is $(k_0, k_1)$-continuous if and only if, for any two points $x$ and $x^\prime$ which are $k_0$-adjacent in $(X,k_0)$, $f(x)=f(x^\prime)$ or $f(x)$ is $k_1$-adjacent to $f(x^\prime)$ (see Theorem 2.1 of \cite{R3}). Besides,
we obviously see that a map $f:(X,k_0) \to (Y,k_1)$ is $(k_0, k_1)$-continuous if and only if, for any $k_0$-connected subset $A \subset (X,k_0)$, $f(A)$ is $k_1$-connected in $(Y,k_1)$ (see Theorem 2.4 of \cite{R3}).
Indeed, it is clear that this continuity can be represented by using a digital $k$-neighborhood of (2.3), which can be very effective to study digital images (see Sections 3, 4, 5, and 6), as follows \cite{H2,H3,R3}.\\
	Assume $X \subset {\mathbb Z}^{n_1}$ and $Y \subset {\mathbb Z}^{n_2}$.
$$\left \{
\aligned & \text{A mapping}\,\,f: (X,k_1) \to (Y, k_2)\,\,\text{is}\,\,(k_1, k_2)\text{-continuous}\\
&\text{if and only if for each}\,\,x\in X, f(N_{k_1}(x))\subset  N_{k_2}(f(x)).
\endaligned
\right\}\eqno(2.4)  $$	

In (2.4), in the case that  $n_1 = n_2$ and $k_1 = k_2$, we say that it is {\it $k_1$-continuous}.\\

	Let us now consider a Berge's and Harary's adjacency for a graph product in \cite{Be1,H14} in the digital setting such as a $C$-compatible $k$-adjacency \cite{H7} and a normal $k$-adjacency \cite{H2} for a product of two digital images.
	To be specific, motivated by a Berge's adjacency for a  graph product in \cite{Be1}, a normal $k$-adjacency was developed in \cite{H2} (see Definition 1 in the present paper). Even though these two adjacencies have the same logic, we can obviously make a distinction between them (see Remark 2.2 in the present paper).\\
		
		To make the paper self-contained, let us first recall the normal $k$-adjacency for a product of digital images in \cite{H2}. \\
	
	\begin{definition} \cite{H2} For two digital images $(X, k_1)$ on
		${\mathbb Z}^{n_1}$ and $(Y, k_2)$ on ${\mathbb Z}^{n_2}$, a $k$-adjacency on the product $X\times Y \subset {\mathbb Z}^{n_1+n_2}$ is defined, according to (2.1), as follows:
		for distinct points $p:=(x, y), q:=(x^{\prime}, y^{\prime}) \in {X \times Y}$,
		$p$ is $k$-adjacent to $q$ if and only if		
		$$\left \{
		\aligned  
		&(1)\, y=y^{\prime}\,\,\text{and}\,\,x\,\,\text{is}\,\,k_1\text{-adjacent to}\,\,
		x^{\prime},\,\,\text{or}\\
		&(2)\, x=x^{\prime}\,\,\text{and}\,\,y \,\,\text{is}\,\,k_2\text{-adjacent to}\,\,
		y^{\prime},\,\,\text{or}\\				
		&	(3) \,x\,\,\text{is}\,\,k_1\text{-adjacent to}\,\,x^{\prime}\,\,\text{and}\,\,y\,\,\text{is}\,\,k_2\text{-adjacent to}\,\,y^{\prime}.
		\endaligned
		\right\}\eqno(2.5)  $$	
		
		Then we say that the product $X\times Y$ has a normal $k$-adjacency derived from $(X, k_1)$ and $(Y, k_2)$, i.e., digital image $(X\times Y,k)$.
	\end{definition}

	\begin{remark} \cite{H2,H5,H6,H13,H14} A normal $k$-adjacency is slightly different from a Berge's adjacency (or strong adjacency). The difference between them is attributed to a distinction between $\lq\lq$adjacency"in graph theory \cite{Be1} and $\lq\lq$$k$-adjacency" in digital topology \cite{H2,H7}.
		Namely, while every graph product always has a  Berge's adjacency, not every digital product always has a normal $k$-adjacency (see Theorem 2.6 in the present paper).
			\end{remark}

	\begin{proposition} \cite{H6,H7} Assume two digital images $(X, k_1)$ on
		${\mathbb Z}^{n_1}$ and $(Y, k_2)$ on ${\mathbb Z}^{n_2}$. Given a  product $X \times Y  \subset {\mathbb Z}^{n_1+n_2}$ with a $k$-adjacency,
		the following are equivalent. \\
		(1) A relation set $(X \times Y, k)$ is a  product with a normal $k$-adjacency. \\
		(2) For any point $p:=(x, y) \in X \times Y$, $$N_k(p)= N_{k_1}(x) \times N_{k_2}(y). \eqno(2.6)$$
	\end{proposition}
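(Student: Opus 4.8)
The plan is to prove the equivalence by expanding each side into the same explicit decomposition of the $1$-neighbourhood $N_k(p)$. The underlying observation is that a $k$-adjacency on a digital image is completely recovered from its neighbourhoods of (2.3): for distinct points $p,q$, the statement ``$q$ is $k$-adjacent to $p$'' means exactly ``$q\in N_k^\ast(p)$''. Thus it suffices to match the neighbourhood data on the two sides.

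For the implication $(1)\Rightarrow(2)$, I would fix $p=(x,y)\in X\times Y$ and read off $N_k^\ast(p)$ directly from the three clauses of (2.5) in Definition 1. Clause (1) contributes precisely the points $(x',y)$ with $x'$ $k_1$-adjacent to $x$, that is, the set $N_{k_1}^\ast(x)\times\{y\}$; clause (2) contributes $\{x\}\times N_{k_2}^\ast(y)$; and clause (3) contributes $N_{k_1}^\ast(x)\times N_{k_2}^\ast(y)$. Adjoining the point $p=(x,y)$ itself and using the splittings $N_{k_1}(x)=\{x\}\cup N_{k_1}^\ast(x)$ and $N_{k_2}(y)=\{y\}\cup N_{k_2}^\ast(y)$ from (2.3), these four pieces reassemble exactly into $N_{k_1}(x)\times N_{k_2}(y)$, which is the formula (2.6).

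For the converse $(2)\Rightarrow(1)$, I would run the same computation in reverse. Assuming $N_k(p)=N_{k_1}(x)\times N_{k_2}(y)$ for every $p=(x,y)$, a distinct point $q=(x',y')$ is $k$-adjacent to $p$ if and only if $q\in N_k(p)\setminus\{p\}$, i.e. if and only if $x'\in N_{k_1}(x)$, $y'\in N_{k_2}(y)$ and $(x',y')\neq(x,y)$. Splitting the last condition into the cases ``$x'=x$'', ``$y'=y$'', and ``$x'\neq x$ and $y'\neq y$'' reproduces verbatim clauses (2), (1), (3) of (2.5) respectively; hence the given $k$-adjacency on $X\times Y$ coincides with the normal $k$-adjacency derived from $(X,k_1)$ and $(Y,k_2)$, which is statement (1).

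Since the whole argument is bookkeeping, the only step that deserves genuine care — and where a reader will want the details spelled out — is the claim that the three contributions coming from (2.5) are pairwise disjoint and, together with $\{p\}$, tile $N_{k_1}(x)\times N_{k_2}(y)$ without overlap. This rests on tracking which coordinate(s) of $p$ are held fixed in each clause, together with the elementary fact that ``$x'$ is $k_1$-adjacent to $x$'' already forces $x'\neq x$ (and likewise for $y$), so that $N_{k_i}(z)=\{z\}\cup N_{k_i}^\ast(z)$ is a disjoint union. Once this is noted, both implications are immediate.
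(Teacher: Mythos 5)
Your argument is correct: the four pieces $\{(x,y)\}$, $N_{k_1}^\ast(x)\times\{y\}$, $\{x\}\times N_{k_2}^\ast(y)$, and $N_{k_1}^\ast(x)\times N_{k_2}^\ast(y)$ do tile $N_{k_1}(x)\times N_{k_2}(y)$ disjointly, and your case split on $(x',y')\neq(x,y)$ recovers the three clauses of (2.5) exactly, so both implications go through. Note that the paper itself gives no proof of this proposition -- it is quoted from \cite{H6,H7} -- so there is nothing internal to compare against; your bookkeeping via the identity $N_{k_i}(z)=\{z\}\cup N_{k_i}^\ast(z)$ is the natural and expected argument, and it correctly treats the hypothesis that $k$ is a fixed adjacency of (2.1) on ${\mathbb Z}^{n_1+n_2}$ whose restriction to $X\times Y$ is being tested against (2.5).
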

	
	Motivated by a Harary's adjacency \cite{H14}, a $C$-compatible $k$-adjacency for a product of two digital images was also established \cite{H6,H7} (see also \cite{H5}) as follows:
	
	\begin{definition} \cite{H6,H7} For two digital images $(X, k_1)$ on
		${\mathbb Z}^{n_1}$ and $(Y, k_2)$ on ${\mathbb Z}^{n_2}$, a $k$-adjacency on the  product $X\times Y \subset {\mathbb Z}^{n_1+n_2}$ is defined as follows:
		For distinct points $p:=(x, y), q:=(x^{\prime}, y^{\prime}) \in {X \times Y}$,
		$p$ is $k$-adjacent to $q$ if and only if
		$$\left \{
		\aligned  
		&(1)\, y=y^{\prime}\,\,\text{and}\,\,x\,\,\text{is}\,\,k_1\text{-adjacent to}\,\,
		x^{\prime},\,\,\text{or}\\
		&(2)\, x=x^{\prime}\,\,\text{and}\,\,y \,\,\text{is}\,\,k_2\text{-adjacent to}\,\,
		y^{\prime}.\\				
			\endaligned
		\right\}\eqno(2.7)  $$	
						Then, we say that the product $X\times Y$ has a $C$-compatible $k$-adjacency derived  from $(X, k_1)$ and $(Y, k_2)$, i.e., digital image $(X\times Y,k)$.
	\end{definition}

Let us now recall a distinction between a Harary's adjacency and a $C$-compatible $k$-adjacency, as follows:
	
	\begin{remark} \cite{H6,H7} A $C$-compatible $k$-adjacency of Definition 2 is different from a Harary's  adjacency in graph theory \cite{H14}. The decisive difference between them comes from the distinction between  $\lq\lq$adjacency" in graph theory \cite{H14} and $\lq\lq$$k$-adjacency" of (2.1) \cite{H6,H7}. Namely, given two graphs $X$ and $Y$, while every graph product $X\times Y$ always has a Harary's adjacency, not every digital product always has a $C$-compatible $k$-adjacency (see Theorem 2.6 in the present paper).
			\end{remark}
		
				Then a  product with a $C$-compatible $k$-adjacency can be represented as a relation set as follows:
	
	\begin{proposition} \cite{H6,H7} Assume two digital images $(X, k_1)$ on
		${\mathbb Z}^{n_1}$ and $(Y, k_2)$ on ${\mathbb Z}^{n_2}$. For the product $X \times Y  \subset {\mathbb Z}^{n_1+n_2}$ with a $k$-adjacency,
		the following are equivalent. \\
		(1) The relation set $(X \times Y, k)$ is a  product with a $C$-compatible $k$-adjacency. \\
		(2) For any point $p:=(x, y) \in X \times Y$, 
		$$N_k(p)= (N_{k_1}(x) \times \{y\}) \cup (\{x\} \times N_{k_2}(y)). \eqno(2.8)$$
	\end{proposition}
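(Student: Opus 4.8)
The plan is to establish the equivalence by unwinding the definition of the digital $k$-neighborhood, $N_k(p)=\{q\in X\times Y\mid q\ \text{is}\ k\text{-adjacent to}\ p\}\cup\{p\}$, and matching it term-by-term with conditions (1)--(2) of Definition 2. The whole argument will rest on the elementary fact that, for $q\neq p$, membership $q\in N_k(p)$ is equivalent to $q$ being $k$-adjacent to $p$; the reasoning mirrors that of Proposition 2.3, with (2.7) in place of (2.5) and (2.8) in place of (2.6).

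For the direction $(1)\Rightarrow(2)$, I would fix $p=(x,y)\in X\times Y$ and argue by double inclusion. The basepoint $p$ lies in both sides of (2.8): trivially in $N_k(p)$, and in the right-hand side because $x\in N_{k_1}(x)$ gives $(x,y)\in N_{k_1}(x)\times\{y\}$. For any $q=(x',y')\in N_k(p)$ with $q\neq p$, the point $q$ is $k$-adjacent to $p$, so (2.7) yields either $y=y'$ with $x$ being $k_1$-adjacent to $x'$ (hence $q\in N_{k_1}(x)\times\{y\}$) or $x=x'$ with $y$ being $k_2$-adjacent to $y'$ (hence $q\in\{x\}\times N_{k_2}(y)$), which gives one inclusion. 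For the reverse, any $q$ in the right-hand side of (2.8) that differs from $p$ is of the form $(x',y)$ with $x'$ $k_1$-adjacent to $x$, or $(x,y')$ with $y'$ $k_2$-adjacent to $y$, and (2.7)(1) or (2.7)(2) then places $q$ into $N_k(p)$.

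For $(2)\Rightarrow(1)$, I would check directly that the given $k$-adjacency on $X\times Y$ obeys (2.7). If distinct points $p=(x,y)$ and $q=(x',y')$ are $k$-adjacent, then by hypothesis $q\in N_k(p)\setminus\{p\}=\bigl((N_{k_1}(x)\times\{y\})\cup(\{x\}\times N_{k_2}(y))\bigr)\setminus\{p\}$, so $q$ lies in $N_{k_1}^\ast(x)\times\{y\}$ or in $\{x\}\times N_{k_2}^\ast(y)$, which is precisely (2.7)(1) or (2.7)(2). Conversely, if (2.7)(1) holds then $q=(x',y)\in N_{k_1}(x)\times\{y\}\subset N_k(p)$ with $q\neq p$, so $q$ is $k$-adjacent to $p$; the case of (2.7)(2) is symmetric. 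Together these two implications establish the claimed equivalence.

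I do not expect a genuine obstacle here; the one point demanding care is the bookkeeping of the basepoint $p$, which always belongs to $N_k(p)$ but is excluded by the \lq\lq distinct points" clause in (2.7), so it must be adjoined or removed by hand when passing between the two formulations. One also uses tacitly that every $k(t,n)$-adjacency of (2.1) is symmetric, so that \lq\lq $p$ is $k$-adjacent to $q$" and \lq\lq $q$ is $k$-adjacent to $p$" may be used interchangeably.
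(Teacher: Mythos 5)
Your proposal is correct. The paper does not prove this proposition at all---it is quoted from \cite{H6,H7} without argument---so there is no in-paper proof to compare against; your double-inclusion verification, unwinding $N_k(p)$ against the two clauses of (2.7) and handling the basepoint $p$ separately, is exactly the routine argument the definition calls for and matches the way the paper treats the analogous normal-adjacency case (Proposition 2.3, (2.5) versus (2.6)).
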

	
	In view of Definitions 1 and 2, the following is obtained.
	\begin{theorem} Not every digital image $(X,k)$ always has a $C$-compatible $k$-adjacency or  a normal $k$-adjacency for the product $X^2$. 
			\end{theorem}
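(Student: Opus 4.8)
The plan is to produce explicit digital images whose squares carry no admissible adjacency, arguing through the neighbourhood characterisations (2.6) and (2.8). The crucial structural remark is that, by Definitions 1 and 2, any $k$-adjacency placed on $X^2\subset{\mathbb Z}^{2n}$ must be one of the standard adjacencies $k(t,2n)$ of (2.1) with $t\in[1,2n]_{\mathbb Z}$. Hence a digital image $(X,k)$ fails to have a normal (resp. $C$-compatible) $k$-adjacency for $X^2$ precisely when, for \emph{every} such $t$, the restriction of $k(t,2n)$ to $X^2$ violates (2.6) (resp. (2.8)); so the proof reduces to running down a finite list and discarding each entry for one of two reasons. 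Either it is \emph{too coarse} --- some point that (2.6)/(2.8) forces to be a neighbour of a base point $p$ lies at coordinatewise distance larger than $t$ from $p$ --- or it is \emph{too fine} --- it creates an edge of $X^2$ that (2.6)/(2.8) forbids.

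For the normal case I would take $(X,k)=(SC_4^{2,4},4)$, writing $X=\{x_1,x_2,x_3,x_4\}$ with $x_1=(0,0),\ x_2=(1,0),\ x_3=(1,1),\ x_4=(0,1)$, so that $N_4(x_1)=\{x_1,x_2,x_4\}$ and $x_3\notin N_4(x_1)$. By (2.6) a normal $k$-adjacency on $X^2$ would have to satisfy $N_k(x_1,x_1)=N_4(x_1)\times N_4(x_1)$; this set contains $(x_2,x_4)$, which differs from $(x_1,x_1)$ in two of the four coordinates of ${\mathbb Z}^4$, so $k(1,4)$ is too coarse. But for $t\ge2$ the point $(x_3,x_1)$ is $k(t,4)$-adjacent to $(x_1,x_1)$ --- they differ in exactly two coordinates, each by $\pm1$ --- while (2.6) forbids that edge because $x_3\notin N_4(x_1)$; thus each such $t$ is too fine. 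Since $t\in\{1,2,3,4\}$, no value works and $X^2$ admits no normal $k$-adjacency.

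For the $C$-compatible case I would take a three-point digital image $(X,8)$ with $X=\{x_1,x_2,x_3\}$, $x_1=(0,0),\ x_2=(1,0),\ x_3=(0,1)$, in which every pair of points is $8$-adjacent. By (2.8) a $C$-compatible $k$-adjacency on $X^2$ would have to satisfy $N_k(x_2,x_1)=(N_8(x_2)\times\{x_1\})\cup(\{x_2\}\times N_8(x_1))$; this set contains $(x_3,x_1)$, which differs from $(x_2,x_1)$ in two coordinates of ${\mathbb Z}^4$ (the ``step'' $x_2\to x_3$ changes two coordinates), so $k(1,4)$ is again too coarse. Yet for $t\ge2$ the points $(x_2,x_1)$ and $(x_1,x_3)$ are $k(t,4)$-adjacent (two differing coordinates), whereas (2.8) excludes that edge since neither coordinate of $(x_2,x_1)$ agrees with the corresponding coordinate of $(x_1,x_3)$. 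Hence no $t$ works and $X^2$ admits no $C$-compatible $k$-adjacency. A single image witnessing both statements at once is the four-point set $\{(0,0,0),(1,0,0),(1,1,0),(0,1,1)\}\subset{\mathbb Z}^3$ with the $18$-adjacency: it contains a two-coordinate $18$-edge, so some product neighbourhoods in ${\mathbb Z}^6$ force $t$ to be large, together with the three-coordinate non-$18$-adjacent pair $\{(1,0,0),(0,1,1)\}$, which then supplies the forbidden edge.

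The step I expect to be delicate is the ``too fine'' half of each argument. One must pick $X$ so that the non-($k$-)adjacent pair used to manufacture a spurious edge differs in \emph{at most} as many coordinates as the lower bound on $t$ already forced by (2.6)/(2.8); if instead every non-adjacency of $X$ differed by $\ge2$ in some single coordinate, that pair would be invisible to \emph{every} $k(t,2n)$ and this objection would evaporate. Reconciling these two demands is exactly why the normal counterexample uses a ``thin'' adjacency (which has diagonal non-edges) while the $C$-compatible one uses a ``thick'' adjacency (which has diagonal edges), and why a single unified counterexample must live in ${\mathbb Z}^3$ with the intermediate $18$-adjacency.
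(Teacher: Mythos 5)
Your method is the same as the paper's: exhibit an explicit digital image and rule out every candidate adjacency $k(t,2n)$ on $X^2$ by checking it against the neighbourhood characterisations (2.6) and (2.8), splitting into ``too coarse'' and ``too fine'' cases. The paper does this with a single witness, $MSC_{18}$ (a six-point $SC_{18}^{3,6}$ in ${\mathbb Z}^3$), whose product fails \emph{both} (2.6) and (2.8) at the point $(x_2,x_2)$ for all $k\in\{12,72,232,472,664,728\}$; you instead use two smaller, tailored witnesses, and your verifications for them are correct: $SC_4^{2,4}$ has no normal adjacency on its square (though it \emph{does} have a $C$-compatible one, namely $k(1,4)=8$, as the paper itself uses in Theorem 6.7), and your three-point $8$-connected set has no $C$-compatible adjacency on its square (though it \emph{does} have a normal one, $k(4,4)=80$). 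This matters because the statement, read literally as $\exists X\,\bigl(\neg C(X)\wedge\neg N(X)\bigr)$, requires one image failing both conditions simultaneously; neither of your two primary examples does that on its own. Your proposed unified witness $\{(0,0,0),(1,0,0),(1,1,0),(0,1,1)\}$ with the $18$-adjacency does work --- the diagonal edge forces $t\ge2$ (killing $C$-compatibility via points such as $(y_2,y_2)$) and the pair $(y_4,y_4)$ forces $t\ge4$ while the non-adjacent pair $\{(1,0,0),(0,1,1)\}$ then produces a forbidden edge at $(y_2,y_1)$ for every $t\ge3$ (killing normality) --- but you only sketch this; to fully match the theorem you should carry out for it the same two-case verification you did for the separate examples. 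Your closing observation about balancing the forced lower bound on $t$ against the coordinate-distance of a non-adjacent pair is exactly the right diagnostic and explains why the paper's $MSC_{18}$, which lives in ${\mathbb Z}^3$ with the intermediate $18$-adjacency, serves as a one-shot counterexample.
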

		Even though the assertion was done in \cite{H6,H7}, since the fact plays an important role in studying $DT$-group theory, we now confirm the assertion by using  a good example, as follows:
		
		\begin{proof} Consider the digital image $X:=MSC_{18}$ with an $18$-adjacency \cite{H4} (see (2.9)), where
			
			$$MSC_{18}:= \left \{
			\aligned
			& x_0=(0,0,0), x_1=(1,-1,0), x_2=(1,-1,1),\\
			& x_3=(2,0,1), x_4=(1,1,1), x_5=(1,1,0)
			\endaligned
			\right\} \eqno(2.9)
			$$
			and the product $MSC_{18}\times MSC_{18} \subset {\mathbb Z}^6$ from (2.9).
			Then, we note that $MSC_{18}$ is a kind of $SC_{18}^{3,6}$ and further,  it is obvious that none of the $C$-compatible $k$-adjacency and the normal $k$-adjacency on the product exists, $k \in \{12,72,232, 472,664, 728\}$ (see (2.1), (2.2), (2.6), and (2.8)). It is clear that $MSC_{18}\times MSC_{18}$ does not satisfy both (2.6) and (2.8). 
			For instance, it is obvious that the point $(x_2,x_2) \in MSC_{18}\times MSC_{18}$ from (2.9) does not satisfy both (2.6) and (2.8). In detail, for any $k \in \{12,72,232, 472,664, 728\}$, we clearly obtain
					
			$$\left \{
			\aligned
			& N_k((x_2,x_2)) \neq (N_{18}(x_2) \times \{x_2\}) \cup  (\{x_2\} \times N_{18}(x_2)),\,\,\text{and}\\
			& N_k((x_2,x_2)) \neq N_{18}(x_2)\times N_{18}(x_2),
			\endaligned
			\right\} \eqno(2.10)
			$$
			which implies that $MSC_{18}\times MSC_{18}$ does not have both the $C$-compatible $k$-adjacency and the normal $k$-adjacency.
								\end{proof}

	\section{\bf Utilities of both the $C_{k^\ast}$- and the $G_{k^\ast}$-adjacency for establishing a $DT$-$k$-group}\label{s3}
		\subsection{\bf Why do we take the $C_{k^\ast}$- and $G_{k^\ast}$-adjacency to define a $DT$-$k$-topological group?}\label{s3-1}
		
For a digital image $(X, k)$, the notions of $G_{k^\ast}$- and $C_{k^\ast}$-adjacency on the product $X^2$ are essential to establishing a $DT$-$k$-group \cite{H10}. Indeed, a $C_{k^\ast}$-adjacency is more rigid than a $G_{k^\ast}$-adjacency (see Corollary 4.7 of \cite{H10}).
To confirm some relationships between the $C_{k^\ast}$-adjacency and the $G_{k^\ast}$-adjacency in \cite{H10} more precisely, let us first recall the $G_{k^\ast}$-adjacency on $X^2$ because it is broader than the $AP_1^\ast(k,k)$-adjacency on $X^2$ that will be discussed in Section 4. 
In particular,  the reader should be careful to note that a $G_{k^\ast}$-adjacency need not be one of the adjacencies of (2.1). Indeed, it is just a relation on a product as stated in \cite{H10}, as follows: 
 
 \begin{definition} \cite{H10} (1) Assume two digital images $(X, k_1:=k(t_1,n_1))$ and $(Y, k_2:=k(t_2,n_2))$.
  For distinct points $p:=(x, y), q:=(x^{\prime}, y^{\prime}) \in X\times Y\subset {\mathbb Z}^{n_1+n_2}$, we say that the point
 	$p$ is related to $q$ if they satisfy the statement of (2.7) (see Definition 2). 
 	Namely, this relation need not be the relation of Definition 2 using $\lq\lq$ if and only if". 
 	Then this relation is called a generalized $C$-compatible ($GC$-, for brevity) adjacency on $X\times Y$ derived from $(X, k_1)$ and $(Y, k_2)$. \\
 	(2) Under the relation set $(X\times Y, GC\text{-adjacency})$ from (1) above, just using  the $k^\ast$-adjacency of ${\mathbb Z}^{n_1+n_2}$ of (2.1), where $k^\ast:=k(t, n_1+n_2)$ and $t=max\{t_1,t_2\}$, 
 	
 	$$\left \{
 	\aligned  
 	& \, p\,\,\text{is called}\,\,G_{k^\ast}\text{-adjacent to}\,\,q\,\,\\
 	&\text{derived from}\,\, (X, k_1)\,\, \text{and}\,\,(Y, k_2),		
 	\endaligned
 	\right\}\eqno(3.1)  $$
 	and we use the notation $(X\times Y, G_{k^\ast})$ as a relation set. Namely, the $G_{k^\ast}$-adjacency is just another presentation of the $GC$-adjacency by using the $k^\ast$-adjacency.
 	 \end{definition}
  As a special case of Definition 3, we may assume $(X, k_1)=(Y, k_2)$.
 Then we can also deal with this case in the same way as in Definition 3.

  	Afte comparing the adjacencies of Definitions 2 and 3, it is clear that the $GC$-adjacency is broader than the adjacency of Definition 2 because the former uses only the part  $\lq\lq$ if" as in Definition 2.
  More precisely, accoring to Definitions 2 and 3, we can confirm some difference between a $G_{k^\ast}$-adjacency and a $C$-compatible $k$-adjacency, as follows:\\  
  $(\star 1)$ Unlike the $C$-compatible $k$-adjacency of Definition 2, a $G_{k^\ast}$-adjacency always and uniquely exists as a relation on $X\times Y$ derived from $(X, k_1)$ and $(Y, k_2)$.\\
  	 $(\star 2)$ The reader should be careful to note that why the number(subscript) $k^\ast$ of the $G_{k^\ast}$-adjacency is taken (see (3.1)), i.e., $\exists$ $(X\times Y, G_{k^\ast})$ as a relation set that need not be a digital image. The reason is the following: Even though a $GC$-adjacency
  	 need not be a digital adjacency of (2.1), in view of Definition 3(1), we can see that the  $GC$-adjacency or $G_{k^\ast}$-adjacency of $X\times Y$  is the most similar (or closest) to the  $k^\ast$-adjacency of ${\mathbb Z}^{n_1+n_2}$ of (2.1).
  	  Hence the paper \cite{H10} used the number $k^\ast$ for the $G_{k^\ast}$-adjacency. \\
  	  $(\star 3)$ It is obvious that a $G_{k^\ast}$-adjacency need not be equal to the $k^\ast$-adjacency (see Example 3.1 below). Indeed, the number $k^\ast$ of the $G_{k^\ast}$-adjacency  is just an auxiliary quantity to indicate some closeness between the $GC$-adjacency and the $k^\ast$-adjacency of (2.1).
   	To be specific, based on Definition 3, for a point $p \in X\times Y$,
   	let   	   		 	
   		$$
   	\begin{cases}
   	& C:=\{q \in \vert\, q\,\,\text{is}\,\,G_{k^\ast}\text{-adjacent to}\,\, p\}\,\,\text{and}\,\, \\
   	& D:=\{q \in \vert\, q\,\,\text{is}\,\,k^\ast\text{-adjacent to}\,\, p\}.
   	\end{cases} $$	
   	   	
   	Then it is clear that 
   	$\sharp C \leq \sharp D$ owing to $k^\ast:=k(t, n_1+n_2)$ as in Definition 3(2). 
   	This confirms that a $G_{k^\ast}$-adjacency need not be equal to a $k^\ast$-adjacency on $X\times Y$ \cite{H10}. Indeed, it is just a relation on $X\times Y$ as stated in Definition 3. \\
   	$(\star 4)$ Without the $G_{k^\ast}$-adjacency, we have some difficulties in establishing a $DT$-group structure on a digital image $(X, k)$ (see Section 5 and Definition 10 compared to Definition 12) because even some $SC_k^{n,l}$ might not be a $DT$-$k$-group because it need not have a $C$-compatible $k^\prime$-, a normal $k^\prime$-adjacency, or an $AP_u(k,k)$-adjacency, $u \in \{1,2\}$ (see Theorem 2.6 and Section 5 and Remark 6.5). 
   	
 	\begin{example} While $SC_4^{2,8} \times SC_8^{2,6} \subset {\mathbb Z}^4$ does not have any  $C$-compatible $k$-adjacency, it has the $G_{k^\ast}$-adjacency, i.e., $\exists$ $(SC_4^{2,8} \times SC_8^{2,6}, G_{k^\ast})$ as a relation set, where $k^\ast:=k(2,4)=32$. In this case  we note that the relation $G_{k^\ast}$ is not equal to the $32$-adjacency. Many examples for a  $G_{k^\ast}$-adjacency appeared in \cite{H10}.
 	\end{example}
 	
 	 	 Based on the $G_{k^\ast}$-adjacency,  the following $G_{k^\ast}$-neighborhood of a point of  $X\times Y$ is defined.
 
 \begin{definition} \cite{H10} Assume $(X\times Y, G_{k^\ast})$.
 	For an element $p \in X \times Y$, the $G_{k^\ast}$-neighborhood of $p$ is defined, as follows:
 	
 	$$\left \{
 	\aligned  
 	& 	N_{G_{k^\ast}}^\ast(p):=\{q \in X \times Y\,\vert \, q \,\,\text{is}\,\,G_{k^\ast}\text{-adjacent to}\,\,p\}\,\, \text{and}\,\\
 	& N_{G_{k^\ast}}(p):=N_{G_{k^\ast}}^\ast(p) \cup \{p\}.
 	\endaligned
 	\right\}\eqno(3.2)  $$	
 	 \end{definition}

\begin{definition} \cite{H10} Given $(X, k_1)$ on ${\mathbb Z}^{n_1}$ and 
$(Y, k_2)$ on ${\mathbb Z}^{n_2}$, $k_i:= k(t_i, n_i), i \in \{1, 2\}$, assume a $C$-compatible 
	$k$-adjacency on $X\times Y$. 
	Then, if the adjacency $k$ of Definition 2 is equal to $k^\ast:=k(t, n_1+n_2), \,\,\text{where}\,\,t=\text{max}\,\{t_1, t_2\}$, then for the points $p$ and $q$ in Definition 2, we say that $p$ is $C_{k^\ast}$-adjacent to $q$ and the digital image $(X\times Y, C_{k^\ast})$ is derived from the given two digital images $(X, k_1)$ and  $(Y, k_2)$, where $C_{k^\ast}=k^\ast$.
\end{definition}
	As a special case of Definition 5, we may assume $(X, k_1)=(Y, k_2)$.
Then we can also deal with this case in the same way as in Definition 5.\\

In view of Definitions 2 and 5, the following is obtained.
\begin{proposition} With Definition 5, we obtain
	
$$	\begin{cases}
		& C_{k^\ast}=k^\ast=\text{min}\{k\,\vert \,\text{the pair}\,(X\times Y,k)\,\, \text{is a product}\\
		&\text{with a} \,\,C\text{-compatible}\,\,k\text{-adjacency derived from}\,\,(X,k_1)\,\,\text{and}\,\,(Y, k_2)\}.
	\end{cases} $$	
	\end{proposition}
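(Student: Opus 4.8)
The plan is to establish the two pieces of the displayed equality separately. The identity $C_{k^\ast}=k^\ast$ is nothing but a restatement of Definition 5: the symbol $C_{k^\ast}$ is introduced there precisely for the case in which the $C$-compatible $k$-adjacency of Definition 2 on $X\times Y$ coincides with the adjacency $k^\ast:=k(t,n_1+n_2)$ of (2.1), where $t=\max\{t_1,t_2\}$, and it is set equal to it by construction. The substantive part is the equality $k^\ast=\min S$, where $S$ denotes the set of all $k$ for which $(X\times Y,k)$ is a product with a $C$-compatible $k$-adjacency derived from $(X,k_1)$ and $(Y,k_2)$. One containment is immediate: the hypothesis standing behind Definition 5 is that $(X\times Y,k^\ast)$ is such a product, so $k^\ast\in S$ and in particular $S\neq\emptyset$; hence it suffices to prove that every element of $S$ is at least $k^\ast$.

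For that lower bound I would run the slice argument supplied by Proposition 2.5. Let $k\in S$ and write $k=k(s,n_1+n_2)$ according to (2.1). By Proposition 2.5, $N_k(p)=(N_{k_1}(x)\times\{y\})\cup(\{x\}\times N_{k_2}(y))$ for every $p=(x,y)\in X\times Y$, so in particular $N_{k_1}(x)\times\{y\}\subseteq N_k(p)$. Now choose a point $x\in X$ having a $k_1$-adjacent point $x'\in X$ that differs from $x$ in exactly $t_1$ coordinates, each by $\pm 1$ --- such a configuration exists because $(X,k_1)$ carries the adjacency $k_1=k(t_1,n_1)$ --- and pick any $y\in Y$. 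Then $(x',y)\in N_{k_1}(x)\times\{y\}\subseteq N_k((x,y))$, while $(x',y)$ and $(x,y)$ differ in exactly $t_1$ coordinates (all lying among the first $n_1$ of the $n_1+n_2$ coordinates of $\mathbb{Z}^{n_1+n_2}$) and agree in the remaining ones. Reading off the defining rule of the $k(s,n_1+n_2)$-adjacency, namely that at most $s$ coordinates differ by $\pm 1$ and the rest coincide, forces $t_1\leq s$. A symmetric choice of a witness pair inside $Y$ together with the vertical slice $\{x\}\times N_{k_2}(y)$ gives $t_2\leq s$, so $s\geq\max\{t_1,t_2\}=t$; since $k(s,n_1+n_2)=\sum_{i=1}^{s}2^iC_i^{n_1+n_2}$ is non-decreasing in $s$, we conclude $k=k(s,n_1+n_2)\geq k(t,n_1+n_2)=k^\ast$. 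Together with $k^\ast\in S$ this yields $\min S=k^\ast$, completing the argument.

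The step I expect to be the main obstacle is the extraction of the witness pair $\{x,x'\}\subset X$ realizing a full $t_1$-coordinate jump (and its counterpart in $Y$): this is what converts the purely formal parameter $t_1$ in $k_1=k(t_1,n_1)$ into a geometric feature actually present inside $X$, and it is the place where one must invoke the standing convention that the ambient dimension and the adjacency parameter are tight for the digital image $(X,k_1)$; without it, $X$ could secretly carry a lower-dimensional adjacency and admit $C$-compatible products for values of $k$ strictly below $k^\ast$. Apart from that, the remaining work is routine bookkeeping: checking that coordinate changes confined to the first $n_1$ coordinates versus the last $n_2$ coordinates of $\mathbb{Z}^{n_1+n_2}$ never overlap, so that the two slice neighbourhoods embed disjointly, and recording the easy monotonicity of the quantity (2.1) in its first index.
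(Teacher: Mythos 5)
The paper contains no actual proof of this proposition: it is asserted with the phrase ``In view of Definitions 2 and 5, the following is obtained,'' and then Remark 3.3 merely illustrates it on $SC_8^{2,4}\times SC_8^{2,4}$ (where the $C$-compatible adjacencies are $\{32,64\}$ and $k^\ast=32$) and records the observation $k^\ast\leq k$ without argument. So your slice argument is already more than the paper supplies, and its skeleton is right: $C_{k^\ast}=k^\ast$ is definitional, $k^\ast\in S$ is the standing hypothesis of Definition 5, and the only substantive content is the lower bound $k\geq k^\ast$ for every $k\in S$, which you attack via the neighbourhood formula (2.8) of Proposition 2.5.

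However, the step you flag is a genuine gap and not merely a presentational worry: the paper nowhere imposes the ``tightness'' convention you need, and without it the lower bound --- and hence the proposition as literally stated --- fails. Take $X=\{(0,0),(1,0)\}\subset{\mathbb Z}^2$ declared to carry the $8=k(2,2)$-adjacency and $Y=\{0\}\subset{\mathbb Z}$ with the $2$-adjacency, so $t=\max\{2,1\}=2$ and $k^\ast=k(2,3)=18$. The product $X\times Y=\{(0,0,0),(1,0,0)\}\subset{\mathbb Z}^3$ satisfies (2.8) for \emph{both} $k=6$ and $k=18$ (each side equals the whole two-point set at each point), so $C_{18}=C_{k^\ast}$ exists, yet $\min S=6<18$. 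The failure occurs exactly where you predicted: $X$ contains no $k_1$-adjacent pair realizing a full $t_1$-coordinate jump, so no witness forces $s\geq t_1$. Your argument is therefore correct precisely under the additional hypothesis you name (for each $i$ the digital image $(X_i,k_i)$ actually contains two $k_i$-adjacent points differing in exactly $t_i$ coordinates), and you should state that hypothesis explicitly rather than appeal to a ``standing convention,'' since the paper defines a digital image as an arbitrary subset of ${\mathbb Z}^{n}$ paired with any adjacency from (2.1) and explicitly admits singletons. With that hypothesis added, the remaining bookkeeping (disjointness of the two coordinate blocks, monotonicity of $k(s,n)=\sum_{i=1}^{s}2^iC_i^n$ in $s$) is fine.
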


Let us now explain both the choice of the number(subscript) $k^\ast$ of a $C_{k^\ast}$-adjacency and Proposition 3.2 more precisely, as follows:
\begin{remark} (1) Let us consider 
	$SC_8^{2,4}:=(s_i)_{i \in [0,3]_{\mathbb Z}}$, where $s_0=(0,0), s_1=(1,-1), s_2=(2,0), s_3=(1,1)$.
	Owing to (2.8), we observe that the product $SC_8^{2,4} \times SC_8^{2,4} \subset {\mathbb Z}^4$ has the two kinds of $C$-compatible $k$-adjacencies such as $k \in \{k(2,4)=32, k(3,4)=64\}$.
	Then it is clear that the $C_{k^\ast}$-adjacency of $SC_8^{2,4} \times SC_8^{2,4}$ is exactly the $32$-adjacency because $k(2,4)=32$ where 
	$32=k^\ast=k(t, 4), t=\text{max}\,\{2, 2\}=2$.\\
	Namely, in case $X\times Y$ has several kinds of $C$-compatible $k$-adjacencies, we observe the inequality $k^\ast \leq k$ (see Proposition 3.2).\\	
	(2) The number $k^\ast$ of Definition 5 characterizes the $C_{k^\ast}$-adjacency of $X\times Y$.
	For instance, while the product $SC_8^{2,4} \times SC_{26}^{3,4}\subset {\mathbb Z}^5$ has the two types of $C$-compatible $k$-adjacencies, $k \in \{130=k(3,5), 210=k(4,5)\}$, it has the only  $C_{130}$-adjacency because $130=k^\ast=k(3,5)$, where $3=\text{max}\{2,3\}$. 
	\end{remark}

By Definitions 3 and 5, the following is obtained.
\begin{remark} \cite{H10}
		Assume $(X\times Y, C_{k^\ast})$ exists. Then the product $X\times Y$ also has the $G_{k^\ast}$-adjacency such that $G_{k^\ast}=k^\ast=C_{k^\ast}$.
	\end{remark}

\begin{remark} (1) Consider $(X_i, k_i)$ in ${\mathbb Z}^{n_i}$, $i \in \{1, 2\}$, where $k_i:=k(t_i, n_i)$ (see (2.1)).
	Assume the case $k_i=2n_i$, i.e., $t_i=1, i \in \{1, 2\}$. Then a $C_{k^\ast}$-adjacency on the product $X_1 \times X_2$ always and uniquely exists, where $k^\ast:=k(1, n_1+n_2)=2(n_1+n_2)$, i.e., $C_{k^\ast}=2(n_1+n_2)=k^\ast$ \cite{H10}.\\	
		(2) In case $(X_1 \times X_2, C_{k^\ast}$) exists, based on (1) above, we need to
		  make the number $k^\ast$ of the $C_{k^\ast}$- and $G_{k^\ast}$-adjacency consistent to avoid some confusion of establishing a continuity of the multiplication $\alpha_1$ or $\alpha_1^\prime:X^2 \to X$ referred to in (5.1) and to finally define the continuity as in Definition 10 without any confusion of taking a  $C_{k^\ast}$- and $G_{k^\ast}$-adjacency on $X^2$ for the $(G_{k^\ast}, k)$- or $(C_{k^\ast}, k)$-continuity of the multiplication $\alpha_1$ or $\alpha_1^\prime$ of (5.1) (in detail, see Section 3.2 in the present paper).		  
		   Hence the paper \cite{H10} took the processes like (3.1) and Definitions 3 and 5 with the same number $k^\ast$.	
\end{remark}

Based on the $C_{k^\ast}$-adjacency,  the following $C_{k^\ast}$-neighborhood of a point of  $X_1\times X_2$ is defined.

\begin{definition} \cite{H10} Assume $(X\times Y, C_{k^\ast})$.
	For an element $p \in X \times Y$, we define the $C_{k^\ast}$-neighborhood of $p$, as follows:
	$$\left \{
\aligned  
& 	N_{C_{k^\ast}}^\ast(p):=\{q \in X \times Y\,\vert \, q \,\,\text{is}\,\,C_{k^\ast}\text{-adjacent to}\,\,p\}\,\, \text{and}\,\\
& N_{C_{k^\ast}}(p):=N_{C_{k^\ast}}^\ast(p) \cup \{p\}.
\endaligned
\right\}\eqno(3.3)  $$	
\end{definition}

Let us further compare among $C_{k^\ast}$-, $G_{k^\ast}$-, and the $k^\ast$-adjacency and their neighborhoods of a point $p$ in the given set, as follows:
\begin{corollary} \cite{H10}
	 In case  $(X\times Y,C_{k^\ast})$ exists, the following is obtained.
	$$ N_{C_{k^\ast}}(p)= N_{G_{k^\ast}}(p)= N_{k^\ast}(p) \text{(see Corollary 4.7 of \cite{H10})}.$$
\end{corollary}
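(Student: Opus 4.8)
The plan is to unwind the three notions of adjacency appearing in the statement and to show that, under the standing hypothesis, they all collapse to one and the same relation on $X\times Y$; once that is done, the coincidence of the radius-$1$ neighborhoods of (2.3) is immediate. Throughout I would fix a point $p:=(x,y)\in X\times Y$ and argue pointwise.

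First I would pin down the $C_{k^\ast}$-neighborhood. Since $(X\times Y, C_{k^\ast})$ is assumed to exist, Definition 5 says that the $C$-compatible $k$-adjacency on $X\times Y$ derived from $(X,k_1:=k(t_1,n_1))$ and $(Y,k_2:=k(t_2,n_2))$ is exactly the $k^\ast$-adjacency of ${\mathbb Z}^{n_1+n_2}$ with $k^\ast:=k(t,n_1+n_2)$, $t=\max\{t_1,t_2\}$; that is, $C_{k^\ast}=k^\ast$ as adjacency relations on $X\times Y$. Hence, directly from (2.3) and (3.3), $N_{C_{k^\ast}}(p)=N_{k^\ast}(p)$; equivalently, by Proposition 2.5, $N_{C_{k^\ast}}(p)=(N_{k_1}(x)\times\{y\})\cup(\{x\}\times N_{k_2}(y))$.

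Next I would compare the $k^\ast$-adjacency with the $G_{k^\ast}$-adjacency on $X\times Y$. By Definition 2, the hypothesis that $X\times Y$ carries the $C$-compatible $k^\ast$-adjacency means precisely that, for $p,q\in X\times Y$, the point $p$ is $k^\ast$-adjacent to $q$ \emph{if and only if} $p$ and $q$ satisfy condition (2.7). But satisfying (2.7) is, by Definition 3(1), exactly the defining condition of the $GC$-adjacency, which is renamed the $G_{k^\ast}$-adjacency in Definition 3(2). Therefore the $k^\ast$-adjacency and the $G_{k^\ast}$-adjacency agree as relations on $X\times Y$, and by (3.2) and (2.3) we get $N_{G_{k^\ast}}(p)=N_{k^\ast}(p)$. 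This is the step where the standing hypothesis is genuinely used: in general, as noted in $(\star 3)$, one only has $N_{G_{k^\ast}}(p)\subset N_{k^\ast}(p)$ (the $k^\ast$-neighborhood may be strictly larger), and it is exactly the existence of the $C_{k^\ast}$-adjacency that rules out any extra $k^\ast$-edges; alternatively one may simply invoke Remark 3.4, which records that $(X\times Y,C_{k^\ast})$ forces $G_{k^\ast}=k^\ast=C_{k^\ast}$.

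Finally I would combine the two displayed equalities to obtain $N_{C_{k^\ast}}(p)=N_{G_{k^\ast}}(p)=N_{k^\ast}(p)$ for every $p\in X\times Y$, which is the claim; the argument transfers verbatim to the special case $(X,k_1)=(Y,k_2)$, i.e. to $X^2$. I do not expect a real obstacle here — the whole content is bookkeeping with Definitions 2, 3, 5 and Proposition 2.5 — the one point to handle with care being not to assume a priori that $N_{k^\ast}(p)$ coincides with the $GC$-neighborhood, since that equality is licensed only by the hypothesis that the $C_{k^\ast}$-adjacency exists.
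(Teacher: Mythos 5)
Your proposal is correct and follows essentially the same route the paper relies on: the existence of $(X\times Y, C_{k^\ast})$ forces $C_{k^\ast}=k^\ast$ (Definition 5) and makes the $GC$-relation of Definition 3 coincide with the $k^\ast$-adjacency via the ``if and only if'' in Definition 2, which is exactly the content of Remark 3.4 ($G_{k^\ast}=k^\ast=C_{k^\ast}$) that the paper's cited Corollary 4.7 of \cite{H10} rests on. Your care in noting that without the hypothesis one only gets $N_{G_{k^\ast}}(p)\subset N_{k^\ast}(p)$, as in $(\star 3)$, is precisely the right point to flag.
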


		\subsection{\bf The importance of the $(C_{k^\ast}, k)$- and $(G_{k^\ast}, k)$-continuity for establishing a $DT$-$k$-group}\label{s3-2}
	
		The paper \cite{H10} defined the notion of $(G_{k^\ast}, k)$-continuity of the map $\alpha_1$ of (5.1) in Section 5, as follows: For a digital image $(X,k)$, we say that the map  $\alpha_1:(X^2, G_{k^\ast}) \to (X, k)$ is $(G_{k^\ast}, k)$-continuous if, 
	for two points $x$ and $x^\prime$ which are $G_{k^\ast}$-adjacent on $X^2$, $f(x)=f(x^\prime)$ or $f(x)$ is $k$-adjacent to $f(x^\prime)$ (see Definition 4.18 of \cite{H10}.
		
		As an equivalent presentation of the $(G_{k^\ast}, k^\prime)$-continuity, we have the following \cite{H10}:
		
		\begin{proposition} (see Proposition 4.19 of \cite{H10}) 
			A function $f: (X_1\times X_2,  G_{k^\ast}) \to (Y, k^\prime)$ is $(G_{k^\ast}, k^\prime)$-continuous  if and only if for each $p \in X_1\times X_2$
			$$f(N_{G_{k^\ast}}(p)) \subset N_{k^\prime}(f(p)). $$
		\end{proposition}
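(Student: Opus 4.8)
The plan is to imitate Rosenfeld's classical argument (Theorem 2.1 of \cite{R3}, rephrased in (2.4)), the only difference being that the relation on the source is the $G_{k^\ast}$-adjacency of Definition 3 rather than a genuine $k$-adjacency of (2.1). Since the argument uses nothing about the source relation beyond its being a symmetric binary relation together with the neighborhood recipe (3.2), it transfers without change. I would prove the two implications separately, and in both of them the single mechanism is to trade the dichotomy ``$q$ is $G_{k^\ast}$-adjacent to $p$ or $q=p$'' against the dichotomy ``$f(q)$ is $k^\prime$-adjacent to $f(p)$ or $f(q)=f(p)$'', which is exactly how $N_{k^\prime}(f(p))$ is built in (2.3).

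For the forward direction, assume $f$ is $(G_{k^\ast},k^\prime)$-continuous in the sense recalled just before the proposition, fix $p \in X_1\times X_2$, and take an arbitrary $q \in N_{G_{k^\ast}}(p)$. By (3.2), either $q=p$ or $q$ is $G_{k^\ast}$-adjacent to $p$. In the first case $f(q)=f(p)\in N_{k^\prime}(f(p))$ by (2.3); in the second case the continuity hypothesis gives $f(q)=f(p)$ or $f(q)$ is $k^\prime$-adjacent to $f(p)$, and either way $f(q)\in N_{k^\prime}(f(p))$, again by (2.3). As $q$ was arbitrary, $f(N_{G_{k^\ast}}(p))\subset N_{k^\prime}(f(p))$.

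For the converse, assume $f(N_{G_{k^\ast}}(p))\subset N_{k^\prime}(f(p))$ for every $p$, and let $x,x^\prime \in X_1\times X_2$ be $G_{k^\ast}$-adjacent. Using that the $GC$-adjacency is symmetric (it is inherited from the symmetry of the $k_1$- and $k_2$-adjacencies entering (2.7)), we get $x^\prime \in N_{G_{k^\ast}}(x)$, hence $f(x^\prime)\in f(N_{G_{k^\ast}}(x))\subset N_{k^\prime}(f(x))$. By the definition of the digital $k^\prime$-neighborhood in (2.3), this says precisely that $f(x^\prime)=f(x)$ or $f(x^\prime)$ is $k^\prime$-adjacent to $f(x)$, i.e.\ the defining condition for $(G_{k^\ast},k^\prime)$-continuity holds. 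This establishes the equivalence.

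There is no real obstacle here: the statement is a definitional reformulation, and its whole content is the bookkeeping described above. The only two points worth flagging explicitly --- and the only places the specific nature of $G_{k^\ast}$ intervenes --- are that $N_{G_{k^\ast}}(p)$ contains $p$ itself by the convention in (3.2) (which absorbs the ``equality'' alternative in the continuity definition), and that $G_{k^\ast}$ is a symmetric relation, which is what legitimizes passing from ``$x$ and $x^\prime$ are $G_{k^\ast}$-adjacent'' to ``$x^\prime \in N_{G_{k^\ast}}(x)$'' in the converse. Everything else is formal.
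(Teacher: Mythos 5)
Your argument is correct and is precisely the standard definitional unfolding one expects here; the paper itself states this proposition without proof, citing Proposition~4.19 of \cite{H10}, and your two-directional bookkeeping (trading the dichotomy in $N_{G_{k^\ast}}(p)$ against the dichotomy in $N_{k^\prime}(f(p))$, using symmetry of the $G_{k^\ast}$-relation in the converse) is exactly the argument that proof amounts to. Nothing is missing.
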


		As mentioned in Remark 3.4, since a $C_{k^\ast}$-adjacency is a restricted version of a $G_{k^\ast}$-adjacency, let us recall the notion of  $(C_{k^\ast},k)$-continuity in \cite{H10}, as follows:
	
				 If $X^2$ has a $C_{k^\ast}$-adjacency, we say that the map  $\alpha_1^\prime: (X^2,  C_{k^\ast}) \to (X, k)$ of (5.1) is  $(C_{k^\ast}, k)$-continuous if, 
			for	two points $x$ and $x^\prime$ which are $C_{k^\ast}$-adjacent on $X^2$, $f(x)=f(x^\prime)$ or $f(x)$ is $k$-adjacent to $f(x^\prime)$ (see Definition 3.8 in \cite{H10}).
		As an equivalent presentation of the $(C_{k^\ast}, k^\prime)$-continuity, the following is obtained \cite{H10}:
	
	\begin{proposition} (see Proposition 3.9 of \cite{H10}) For digital images $(X_i, k_i)$, $i \in \{1,2\}$ and $(Y, k^\prime)$, a function $f: (X_1\times X_2,  C_{k^\ast}) \to (Y, k^\prime)$ is $(C_{k^\ast}, k^\prime)$-continuous  if and only if for each $p \in X_1\times X_2$
		$$f(N_{C_{k^\ast}}(p)) \subset N_{k^\prime}(f(p)). $$
	\end{proposition}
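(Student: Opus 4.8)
The plan is to prove the equivalence by directly unwinding the two definitions, in complete parallel with Rosenfeld's characterization of digital continuity recorded in (2.4). Throughout I would keep in mind that a $C_{k^\ast}$-adjacency on $X_1 \times X_2$ is assumed to exist (Definition 5), that $N_{C_{k^\ast}}(p) = N_{C_{k^\ast}}^\ast(p) \cup \{p\}$ by (3.3), and that $N_{k^\prime}(f(p)) = N_{k^\prime}^\ast(f(p)) \cup \{f(p)\}$ by (2.3); these three facts are all that the argument uses.

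First I would prove the forward implication. Assume $f$ is $(C_{k^\ast}, k^\prime)$-continuous in the sense recalled just above the statement, and fix an arbitrary $p \in X_1 \times X_2$. Take any $q \in N_{C_{k^\ast}}(p)$. If $q = p$, then trivially $f(q) = f(p) \in N_{k^\prime}(f(p))$. Otherwise $q \in N_{C_{k^\ast}}^\ast(p)$, i.e. $q$ is $C_{k^\ast}$-adjacent to $p$, so by the defining property of $(C_{k^\ast}, k^\prime)$-continuity we get $f(q) = f(p)$ or $f(q)$ is $k^\prime$-adjacent to $f(p)$; in either case $f(q) \in N_{k^\prime}(f(p))$. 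Since $q$ was arbitrary, $f(N_{C_{k^\ast}}(p)) \subset N_{k^\prime}(f(p))$, and since $p$ was arbitrary this holds for every $p$.

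Next, the converse. Assume $f(N_{C_{k^\ast}}(p)) \subset N_{k^\prime}(f(p))$ for every $p \in X_1 \times X_2$, and let $x, x^\prime \in X_1 \times X_2$ be $C_{k^\ast}$-adjacent. Then $x^\prime \in N_{C_{k^\ast}}^\ast(x) \subset N_{C_{k^\ast}}(x)$, hence $f(x^\prime) \in f(N_{C_{k^\ast}}(x)) \subset N_{k^\prime}(f(x))$. By (2.3) this means $f(x^\prime) = f(x)$ or $f(x^\prime)$ is $k^\prime$-adjacent to $f(x)$, which is precisely the condition defining $(C_{k^\ast}, k^\prime)$-continuity. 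This completes the equivalence.

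I do not anticipate any genuine obstacle: the proof is a routine chase of definitions, and the only places that need a little care are (i) keeping track of the base point in each neighborhood, i.e. the $\{p\}$ and $\{f(p)\}$ summands, when passing between "$C_{k^\ast}$-adjacent or equal" and "lies in $N_{C_{k^\ast}}(\cdot)$", and (ii) invoking the standing hypothesis that the $C_{k^\ast}$-adjacency on $X_1 \times X_2$ exists, so that $N_{C_{k^\ast}}(p)$ is well defined in the first place. If one wanted a more concrete form, one could additionally substitute the explicit description $N_{C_{k^\ast}}(p) = (N_{k_1}(x) \times \{y\}) \cup (\{x\} \times N_{k_2}(y))$ from (2.8), with $p = (x,y)$, but this is not needed for the equivalence itself; the statement is robust to the particular shape of $N_{C_{k^\ast}}(p)$ and depends only on $N_{C_{k^\ast}}(p) = N_{C_{k^\ast}}^\ast(p)\cup\{p\}$.
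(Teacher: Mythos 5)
Your proof is correct and is exactly the routine definition-unwinding that this statement calls for; the paper itself states the proposition as a quoted result from \cite{H10} without reproducing a proof, and the argument in that source is the same neighborhood-versus-adjacency translation you give, in parallel with (2.4). Both directions are handled properly, including the $q=p$ case via the $\{p\}$ and $\{f(p)\}$ summands, so nothing needs to be added.
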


		\section{\bf Improvement of the $NP_u(k_1, \cdots, k_v)$-adjacency and benefits of the $AP_u(k_1, \cdots, k_v)$-adjacency for a  product of  multiple digital images}\label{s4}

	This section first takes issues with unclearness on the $NP_u(k_1, \cdots, k_v)$-adjacency referred to in \cite{B1,B2} (see Definition 7 in the present paper and Remark 4.1). Hence we will be using another approach to improve it.
	Before going there, in relation to the notations of $k$-adjacencies of ${\mathbb Z}^n$ (see (2.1)), as a convenience, as mentioned in Section 2, let us point out some relationships between Boxer's and Han's approach to the digital adjacencies of ${\mathbb Z}^n$ as follows.
	Based on the criterion for establishing the $k$-(or $k(t,n)$-)adjacencies of  ${\mathbb Z}^n$ of (2.1), the paper \cite{B1} takes the notation $\lq\lq$$c_t$-adjacency on ${\mathbb Z}^n$", $t \in [1,n]_{\mathbb Z}$, with a just different notation. Indeed, it is same as the $\lq\lq$$k(t, n)$-adjacency" of (2.1), $t \in [1,n]_{\mathbb Z}$ \cite{H1,H2,H4}, i.e., the $\lq\lq$$c_t$-adjacency on ${\mathbb Z}^n$" of \cite{B1} is equal to the $\lq\lq$$k(t, n)$-adjacency" of (2.1). \\
	As a generalization of the adjacencies of Definitions 1 and 2 (see Proposition 4.5), the paper \cite{B2} referred to the so-called $NP_u(k_1, \cdots, k_v)$-adjacency for the product $\prod_1^v X_i \subset {\mathbb Z}^{n_1+n_2+ \dots +n_v}$ from  the $v$ multiple digital images $(X_i, k_i:=k(t_i, n_i))$, $t_i\in [1,n_i]_{\mathbb Z}$, $X_i \subset {\mathbb Z}^{n_i}, i \in [1, v]_{\mathbb Z}$.\\
	
		To make the paper self-contained, let us now recall the $NP_u(k_1, \cdots, k_v)$-adjacency mentioned in \cite{B1}, as follows:
						
	\begin{definition} (Definition 3.1 of \cite{B1}) 
		Assume the $v$ multiple digital images $(X_i, k_i:=k(t_i, n_i))$, $t_i\in [1,n_i]_{\mathbb Z}$, $X_i \subset {\mathbb Z}^{n_i}, i \in [1, v]_{\mathbb Z}$ (or $(X_i, c_{t_i})$ on ${\mathbb Z}^{n_i}$ according to the notation in \cite{B1}). Then consider the  product $\prod_1^v X_i \subset {\mathbb Z}^{n_1+n_2+ \dots +n_v}$.
		Given positive integers $v$ and $u \in [1,v]_{\mathbb Z}$, let $NP_u(k_1, \cdots , k_v)$ be the adjacency defined on the product $\prod_{1}^v X_i$ as follows. \\
		For $x_i, x_i^\prime \in X_i$, $p=(x_1, \cdots, x_v)$ and $q=(x_1^\prime, \cdots , x_v^\prime)$ are 
		$NP_u(k_1, \cdots, k_v)$-adjacent if and only if	
		$$\left \{
	\aligned
	& (\bullet\,1)\,\text{for at least}\,\,1 \,\,\text{and at most}\,\,u\,\,\text{indices}\,\,i, x_i\,\,\text{and}\,\,x_i^\prime\,\,\text{are}\,\,k_i\text{-adjacent, and}\\
   &(\bullet\,2)\,\,\text{for all other indices}\,\,i, x_i = x_i^\prime.	
	\endaligned
	\right\}\eqno(4.1)  $$		
	\end{definition}
	
	Owing to $(\bullet\,1)$ of (4.1), we note that the given points $p$ and $q$ in Definition 7 are distinct from each other.
	It is clear that  Definition 7 has some unclearness from the viewpoint of logic. To be specific, let us focus on the part (see from line 5  to 8 in Definition 7) $\lq\lq$ let $NP_u(k_1, \cdots, k_v)$ be the adjacency defined on the product $\prod_{1}^v X_i$ as follows \cite{H12}. For $x_i, x_i^\prime \in X_i$,	
	$$\left \{
	\aligned
	& p=(x_1, \cdots, x_v)\,\,\text{and}\,\,q=(x_1^\prime, \cdots , x_v^\prime)\,\,\text{are} \,\,
	NP_u(k_1, \cdots, k_v)\text{-adjacent}\\
	& \Leftrightarrow\\
	& \text{the two conditioins}\,\,(\bullet\,1)\,\,\text{and}\,\,(\bullet\,2)\,\,\text{of Definition 7 hold.}"
		\endaligned
	\right\}\eqno(4.2)  $$	
	
	To show the invalidity of the statement of (4.2), to make the paper self-contained, we now recall the following:
	As used in Definitions 1 and 2, in order to establish a certain definition (or notion), we can use the term $\lq\lq$ if and only if". For instance, given two propositions $P$ and $Q$, $P$ if and only if $Q$,
	Then it is clear that each of $P$ and $Q$ should be a proposition.
	In this respect, we can clearly see that Definition 7 does not have this kind of approach (see Remark 4.1).	
		\begin{remark} \cite{H12} 
  In order for the sentence of (4.2) to be valid as a statement (or proposition) from the viewpoint of logic, the condition $\lq\lq$$p$ and $q$ are $NP_u(k_1, \cdots, k_v)$-adjacent" should be a statement. However, it does not defined, i.e., one does not know what it is. After comparing the $k$-adjacency for establishing the normal $k$-adjacency of Definition 1 and the $NP_u(k_1, \cdots, k_v)$-adjacency of Definition 7, we can see some defects of the latter.
 In detail, the term $NP_u(k_1, \cdots, k_v)$ was not identified before using it as a statement for (4.1) (see also (4.2)).
	Indeed, many kinds of adjacencies on $\prod_{1}^v X_i$ can be considered, e.g., Definitions 1, 2, 3, and 4 and the $(n_1+n_2+ \dots +n_v)$-types of adjacencies of (2.1) and so on. Namely, one does not know about the $NP_u(k_1, \cdots, k_v)$-adjacency so that it is insufficient for an adjacency for the statement of (4.2), which implies that the sentence 
	$\lq\lq$$p$ and $q$ are $NP_u(k_1, \cdots, k_v)$-adjacent $\Rightarrow$ the two conditions $(\bullet\,1)$ and  $(\bullet\,2)$ of Definition 7 hold" could not be admissible as a proposition.\\
	Similarly, the part $\lq\lq$$\Leftarrow$" cannot be valid either.	
		\end{remark}
	
			Probably, when formulating the $NP_u(k_1,k_2, \cdots, k_v)$-adjacency \cite{B1,B2}, there seems to be a mismatch between the author's intention and the mathematical presentation.\\	
				
			In order to avoid some defects of the $NP_u(k_1, \cdots, k_v)$-adjacency of \cite{B1} mentioned just before (see Definition 7 and Remark 4.1), the paper \cite{B2} defined the $NP_u(k_1, \cdots, k_v)$-adjacency again by just replacing $\lq\lq$if and only if" used in Definition 7 by $\lq\lq$if" and the remaining part is fixed to avoid the roughness of Definition 7. However, it also has some fatal drawbacks. To be specific, for instance, let us consider two			 
			digital images $(X_1,4)$ and $(X_2,8)$ on ${\mathbb Z}^2$, where $X_1:=\{x_1=(0,0), x_2=(1,0), x_3=(1,1)\} $ and $X_2:=\{x_1^\prime=(0,0), x_2^\prime=(1,1)\} $.
			Take the point $p:=(x_1,x_1^\prime) \in X_1 \times X_2 \subset {\mathbb Z}^4$ and consider the following two sets around the point $p$, $T_p$ and $T_p^\prime$, i.e.,
				$$
				\begin{cases}
				&T_p= \{q\,\vert \,p\,\,\text{and}\,\,q\,\,\text{satisfy the statement of (4.1) with}\,\, u=1, v=2.\} \,\,\text{and}\\
				&T_p^\prime= \{q\,\vert \,p\,\,\text{and}\,\,q\,\,\text{satisfy the statement of (4.1) with}\,\, u=2, v=2.\}
				\end{cases} $$	
				
				Then we note that the element $(x_3,x_1^\prime) \notin  T_p^\prime=\{(x_2,x_1^\prime), (x_1,x_2^\prime), (x_2,x_2^\prime)\}$.
				Besides, it is clear that $T_p=\{(x_2,x_1^\prime), (x_1,x_2^\prime)\} \subset X_1\times X_2 \subset {\mathbb Z}^4$ with only two elements.
			
			Meanwhile, let us consider the subsets of $X_1\times X_2$.
			
			$$
			\begin{cases}
			& N_8^\ast(p)=\{(x_2,x_1^\prime)\}\,\,\text{as a singleton},\\
			& N_{32}^\ast(p)=\{(x_1,x_2^\prime),(x_2,x_1^\prime), (x_3,x_1^\prime)\} \,\,\text{with three elements},\\
			& N_{64}^\ast(p)= \{(x_1,x_2^\prime),(x_2,x_1^\prime), (x_3,x_1^\prime),(x_2,x_2^\prime)\}\,\, \text{with four points, and}\\
			& N_{80}^\ast(p)= X_1 \times X_2 \setminus \{p\}\,\,\text{with five elements}.
			\end{cases} $$	
			
			Thus we can obviously confirm that each of the sets $T_p$ and $T_p^\prime$ is not equal to any of $N_k^\ast(p)$, where $k \in\{8,32,64,80\}$, which implies that
			the relation satisfying the statement of (4.1) cannot be a digital $k$-adjacency of ${\mathbb Z}^4$. In particular, we need to note that  $T_p^\prime$ is not equal to either of
			$N_{32}^\ast(p)$ and $N_{64}^\ast(p)$ due to $(x_3,x_1^\prime) \notin T_p^\prime$.			
			Finally, in general, the adjacency on the product $\prod_{1}^v X_i$
			satisfying  the statement of (4.1) need not be a digital $k$-adjacency of ${\mathbb Z}^{n_1+n_2+ \dots +n_v}$ of (2.1).
			Indeed, it turns out that the
			$NP_u(k_1, k_2, \cdots, k_v)$-adjacency of \cite{B2} is exactly equal to the $PN(k_1, k_2, \cdots, k_v)$-adjacency of the product  $\prod_{1}^v X_i$ that is a generalized version of the $PN(k_1, k_2)$-adjacency of the product  $X_1\times X_2$ of \cite{KHL1}.	\\
			
			Anyway, since each of all trials in \cite{B1,B2} will be failed to formulate the 
			$NP_u(k_1, k_2, \cdots, k_v)$-adjacency, all results having used the  $NP_u(k_1,k_2, \cdots, k_v)$-adjacency should be improved. Hence the recent paper \cite{H12} initially corrected the $NP_u(k_1,k_2, \cdots, k_v)$-adjacency (see Definition 8 below) and improve it with an $AP_u(k_1,k_2, \cdots, k_v)$-adjacency for a product of $v$ multiple digital images, $u \in [1,v]_{\mathbb Z}$ (see Definition 8), where the term $\lq\lq$$AP$" of 
			the $AP_u(k_1,k_2, \cdots, k_v)$-adjacency is an abbreviated word of $\lq\lq$Adjacency for the Product $\prod_{1}^v X_i$", as follows:
			:\\

					\begin{definition} (Correction of Definiton 7) 
					In Definition 7, replace the part with lines 5-10 			
					 $\lq\lq$Given positive integers $v$ and $u \in [1,v]_{\mathbb Z}$, 
					 consider the  product $\prod_1^v X_i \subset {\mathbb Z}^{n_1+n_2+ \dots +n_v}$.  For $x_i, x_i^\prime \in X_i$, $p=(x_1, \cdots, x_v)$ and $q=(x_1^\prime, \cdots , x_v^\prime)$ are $k$-adjacency of  ${\mathbb Z}^{n_1+n_2+ \dots +n_v}$ of (2.1) if and only if both the conditions 
					 $(\bullet 1)$ and $(\bullet 2)$ hold.
					  Then the $k$-adjacency is called an $NP_u(k_1, \cdots , k_v)$-adjacency (or $AP_u(k_1, \cdots , k_v)$-adjacency) on the product $\prod_{1}^v X_i$".\\
					 	In particular, put 
					 $$k^\ast:=\text{min}\{AP_u(k_1, k_2, \cdots, k_v)\text{-adjacencies of}\,\, \prod_1^v X_i\}.$$
					 Then we say that the $k^\ast$-adjacency is an $AP_u^\ast(k_1, k_2, \cdots, k_v)$-adjacency on $\prod_1^v X_i$, $u \in [1,v]_{\mathbb Z}$.
					 
				\end{definition}	
						Hereinafter, since Definition 8 is a generalization of Definitions 1 and 2, we had better call the adjacency of Definition 8 an $AP_u(k_1,k_2, \cdots, k_v)$-adjacency instead of the  $NP_u(k_1, \cdots , k_v)$-adjacency.
						
			\begin{proposition} \cite{H12} For the $v$ multiple digital images $(X_i, k_i:=k(t_i, n_i))$,  
				$X_i \subset {\mathbb Z}^{n_i},  t_i\in [1,n_i]_{\mathbb Z}, i \in [1, v]_{\mathbb Z}$, consider the  product $\prod_1^v X_i \subset {\mathbb Z}^{n_1+n_2+ \dots +n_v}$ and a $k$-adjacency of  ${\mathbb Z}^{n_1+n_2+ \dots +n_v}$ of (2.1).
				Assume that each point $p:=(x_1,x_2, \cdots, x_v) \in \prod_1^v X_i$ satisfies the following:
							$$\left \{
				\aligned
				& N_k(p)=A_1 \times A_2 \times \cdots \times A_v, A_i \subset X_i, i\in [1,v]_{\mathbb Z},\,\, \text{such that}	\\			
				& (\star 1)\, \text{for at least}\,\,1\,\,\text{and at most}\,\,u \,\,\text{indices}\,\,i, A_i=N_{k_i}(x_i)\,\,\text{and}\,\\
				&(\star 2)\,\text{for all other indices}\,\,i, A_i = \{x_i\}. 
				\endaligned
				\right\}\eqno(4.3)  $$
				Then the $k$-adjacency is an $AP_u(k_1, k_2, \cdots, k_v)$-adjacency on $\prod_1^v X_i$, $u \in [1,v]_{\mathbb Z}$.
						\end{proposition}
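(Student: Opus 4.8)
The plan is to verify directly the property that, by Definition 8, characterises an $AP_u(k_1,k_2,\cdots,k_v)$-adjacency: for all $p=(x_1,\cdots,x_v)$ and $q=(x_1^\prime,\cdots,x_v^\prime)$ in $\prod_1^v X_i$, the point $p$ is $k$-adjacent to $q$ if and only if both $(\bullet\,1)$ and $(\bullet\,2)$ of (4.1) hold. So I fix such a pair $p,q$ and argue the two implications separately, applying the hypothesis (4.3) at whichever point is convenient and using that the $k:=k(t,n_1+\cdots+n_v)$-adjacency of (2.1) is translation invariant, so that whether two points of ${\mathbb Z}^{n_1+\cdots+n_v}$ are $k$-adjacent depends only on how many of their coordinates differ (each by $\pm 1$). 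Throughout, (4.3) plays the role that (2.6) and (2.8) play in Propositions 2.3 and 2.5: it lets us read off the adjacency from the product shape of $N_k(p)$.

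For the forward direction, suppose $q$ is $k$-adjacent to $p$, so $q\in N_k(p)$ and $q\neq p$. By (4.3) write $N_k(p)=A_1\times\cdots\times A_v$, where each $A_i$ is either $N_{k_i}(x_i)$ or $\{x_i\}$ and the set $J:=\{i:A_i=N_{k_i}(x_i)\}$ of active indices satisfies $1\le\sharp J\le u$. From $q\in N_k(p)$ we get $x_i^\prime\in A_i$ for every $i$; hence $x_i^\prime=x_i$ whenever $i\notin J$, while for $i\in J$ the inclusion $x_i^\prime\in N_{k_i}(x_i)=N_{k_i}^\ast(x_i)\cup\{x_i\}$ says $x_i^\prime=x_i$ or $x_i^\prime$ is $k_i$-adjacent to $x_i$. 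Putting $I:=\{i:x_i^\prime\neq x_i\}$, we have $I\subset J$, so $\sharp I\le u$; also $\sharp I\ge 1$ because $q\neq p$; and $x_i^\prime$ is $k_i$-adjacent to $x_i$ for $i\in I$ while $x_i^\prime=x_i$ for $i\notin I$. That is precisely $(\bullet\,1)$ and $(\bullet\,2)$.

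For the converse, assume $(\bullet\,1)$ and $(\bullet\,2)$ and let $I$ be the set of indices at which $x_i^\prime$ is $k_i$-adjacent to $x_i$, so $1\le\sharp I\le u$, $x_i^\prime=x_i$ off $I$, and $q\neq p$ (since $k_{i_0}$-adjacency forces $x_{i_0}^\prime\neq x_{i_0}$ for any $i_0\in I$, and $I\neq\emptyset$). Write again $N_k(p)=A_1\times\cdots\times A_v$ with active set $J$; it suffices to show $I\subset J$, for then $x_i^\prime\in N_{k_i}(x_i)=A_i$ for $i\in I$ and $x_i^\prime=x_i\in A_i$ for $i\notin I$, so $q\in A_1\times\cdots\times A_v=N_k(p)$ and hence $q$ is $k$-adjacent to $p$ (as $q\neq p$). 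To see $I\subset J$, fix $i_0\in I$ and let $\widetilde p\in\prod_1^v X_i$ be obtained from $p$ by replacing its $i_0$-th block $x_{i_0}$ with $x_{i_0}^\prime$. Since $x_{i_0}^\prime$ is $k_{i_0}$-adjacent to $x_{i_0}$, the points $p$ and $\widetilde p$ differ only in the $i_0$-th block, and there in at most $t_{i_0}$ coordinates, each by $\pm 1$; because the ambient $k(t,n_1+\cdots+n_v)$-adjacency realises this one-block move (the setting in which (4.3) is applied, e.g. when $t\ge t_i$ for all $i$), $\widetilde p$ is $k$-adjacent to $p$, so $\widetilde p\in N_k(p)=A_1\times\cdots\times A_v$. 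But the $i_0$-th block of $\widetilde p$ is $x_{i_0}^\prime\neq x_{i_0}$, so $A_{i_0}\neq\{x_{i_0}\}$, which by (4.3) forces $A_{i_0}=N_{k_{i_0}}(x_{i_0})$, i.e. $i_0\in J$; as $i_0\in I$ was arbitrary, $I\subset J$.

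The hard part will be exactly this inclusion $I\subset J$, equivalently the claim that a single-block $k_{i_0}$-move out of $p$ already lands inside $N_k(p)$ and therefore must ``open up'' the $i_0$-th factor of the product $N_k(p)=A_1\times\cdots\times A_v$; this is where one genuinely uses that $\prod_1^v X_i$ carries an honest $k$-adjacency of (2.1) fine enough to contain each component $k_i$-adjacency, so that (4.3) cannot hold with a non-active block in which $X_i$ still has a $k_i$-neighbour of $x_i$. Everything else is routine from (2.3) and the definitions: the distinctness $q\neq p$, the bounds on $\sharp I$, the translation between ``$x_i^\prime\in N_{k_i}(x_i)$'' and ``$x_i^\prime=x_i$ or $x_i^\prime$ is $k_i$-adjacent to $x_i$'', and the step $q\in A_1\times\cdots\times A_v=N_k(p)\Rightarrow q$ $k$-adjacent to $p$. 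I would close by noting the consistency checks: when every $t_i=1$ (i.e. $k_i=2n_i$) this reduces to the normal-adjacency picture of Proposition 2.3, and $AP_v(2,\cdots,2)$ on ${\mathbb Z}^v$ comes out to be the $(3^v-1)$-adjacency, matching the examples used later in the paper.
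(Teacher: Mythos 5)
Your overall strategy --- unwinding the ``if and only if'' of Definition 8 against the product decomposition (4.3) of $N_k(p)$ --- is the right one, and it is essentially all the paper itself does: its proof of this proposition is the single sentence ``Using the notion of $N_{k_i}(x_i)$ and Definition 8, the proof is completed.'' Your forward implication (from $q\in N_k(p)$ to $(\bullet\,1)$ and $(\bullet\,2)$) is complete and correct, and you have correctly isolated where the real content sits, namely the inclusion $I\subset J$ in the converse.

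That is also exactly where your argument has a genuine gap. The step ``$\widetilde p$ is $k$-adjacent to $p$'' is not a consequence of (4.3): it needs the ambient $k(t,n_1+\cdots+n_v)$-adjacency to be fine enough to realise a single-block $k_{i_0}$-move, i.e.\ roughly $t\ge t_{i_0}$, and you only supply this as a parenthetical ``e.g.\ when $t\ge t_i$ for all $i$'' rather than deriving it from the hypothesis. It cannot be derived: take $X_1=\{(0,0),(1,1)\}\subset{\mathbb Z}^2$ with $k_1=8$, $X_2=\{0\}\subset{\mathbb Z}$ with $k_2=2$, and $k=k(1,3)=6$ on $X_1\times X_2\subset{\mathbb Z}^3$. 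Then $N_6(p)=\{p\}=\{x_1\}\times N_{k_2}(x_2)$ for each of the two points, so (4.3) holds with $u=1$ (index $2$ active, index $1$ inactive); yet the two points of $X_1\times X_2$ satisfy $(\bullet\,1)$ and $(\bullet\,2)$ while failing to be $6$-adjacent, so $6$ is not an $AP_1(8,2)$-adjacency. So the converse --- and with it the proposition as literally stated --- requires either promoting your parenthesis to an explicit hypothesis on $t$, or reading (4.3) as the union-type decomposition over all admissible active sets that the paper actually uses later in Corollary 4.8 (note that for $u=1$, $v=2$ the $C$-compatible neighborhood (2.8) is a union of two products, not a single product, so the literal single-product reading of (4.3) is already at odds with the intended examples). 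Your write-up is considerably more informative than the paper's one-line proof, but as it stands the key step is assumed rather than proved.
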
	
				\begin{proof} Using the notion of $N_{k_i}(x_i)$ and Definition 8, the proof is completed.
				\end{proof} 
				Besides, the given points $p$ and $q$ in Proposition 4.2 are distinct from each other (see ($(\star\,1)$) of (4.3)).
							The reader should be careful to make a distinction between Definitions 7 and 8 (see Remark 4.1). Besides, the following are observed.\\				
				$(\ast)$ As for the number $k^\ast$ of Definition 8, the following is obtained.
				$$k^\ast \leq AP_u(k_1,k_2, \cdots, k_v). $$
				$(\ast)$ Each of the adjacencies $AP_u(k_1, k_2, \cdots, k_v)$ and $AP_u^\ast(k_1, k_2, \cdots, k_v)$ is one the adjacencies of ${\mathbb Z}^{n_1+n_2+ \dots +n_v}$ mentioned in (2.1). \\
				$(\ast)$ Not every product $\prod_{1}^v X_i$ of Definition 8 always has an 
				$AP_u(k_1, k_2, \cdots, k_v)$-adjacency (see Remark 4.4).\\
				$(\ast)$ In particular, in Definition 8, if $k_i=k_j, i, j \in [1,v]_{\mathbb Z}$, then we will take the notation  $AP_u(k_1, k_1, \cdots, k_1)$ (resp. $AP_u^\ast(k_1, k_1, \cdots, k_1))$ instead of $AP_u(k_1, k_2, \cdots, k_v)$ (resp. $AP_u^\ast(k_1, k_2, \cdots, k_v))$.	
				
			\begin{example} Let $X:=\{x_0=(0,0,0), x_1=(1,0,1), x_2=(2,1,1), x_3=(1,2,1), x_4=(0,2,0), x_5=(-1,1,0)\} $ and assume the digital image $(X, 18)$. Indeed, this $(X, 18)$ is a kind of $SC_{18}^{3,6}$ (see Section 2).\\
		(1)	Then we obtain the adjacencies $AP_u(18,18)$ and $AP_u^\ast(18,18)$ on the product $X^2\subset {\mathbb Z}^6$, as follows:
						$$
			\begin{cases}
			& AP_1(18,18) \in \{k(2,6), k(3,6)\}, \,\,AP_1^\ast(18,18)=k(2,6) \,\,\text{and}\,\, \\
			& AP_2(18,18) \in \{k(4,6), k(5,6), k(6,6)\}, \,\,AP_2^\ast(18,18)=k(4,6).
			\end{cases} $$
			
			(2) Let us consider $X^3\subset {\mathbb Z}^9$ with the adjacencies	$AP_u(18,18,18)$ and $AP_u^\ast(18,18,18)$. Then we obtain the following:
					$$
			\begin{cases}
			& AP_1(18,18,18) \in \{k(2,9), k(3,9)\}, \,\,AP_1^\ast(18,18,18)=k(2,9),  \\
			& AP_2(18,18,18) \in \{k(4,9), k(5,9)\}, \,\,AP_2^\ast(18,18,18)=k(4,9)\,\,\text{and}\,\,\\
			& AP_3(18,18,18) \in \{k(6,9),k(7,9), k(8,9), k(9,9)\}, \,\,AP_3^\ast(18,18,18)=k(6,9).
						\end{cases} $$
									\end{example}

\begin{remark} With Definition 8, not every  $\prod_1^v X_i$ always has an $AP_u(k_1, k_2, \cdots, k_v)$-adjacency, as follows:\\	
 (1)  $SC_{2n_1}^{n_1,l_1} \times SC_{2n_2}^{n_2,l_2} \times SC_{2n_3}^{n_3,l_3} \subset {\mathbb Z}^{n_1+n_2+n_3}$ has  an $AP_1(2n_1,2n_2, 2n_3)$-adjacency, where 
 $$AP_1(2n_1,2n_2, 2n_3)=k(1,n_1+n_2+n_3)=AP_1^\ast(2n_1,2n_2, 2n_3)=2(n_1+n_2+n_3).$$
   However, it does not have any $AP_u(2n_1,2n_2, 2n_3)$-adjacency, $u \in \{2,3\}$.\\
(2) For the product $SC_8^{2,4} \times SC_8^{2,6} \times SC_8^{2,4}\subset {\mathbb Z}^6$,
we have the adjacency $AP_2(8,8,8)\in\{k(4,6), k(5,6)\}$ \cite{H12} so that $AP_2^\ast(8,8,8)=k(4,6)$. Besides, we obtain $AP_3(8,8,8)=k(6,6)=AP_3^\ast(8,8,8)=728$-adjacency (see (2.2)).\\ 
	(3) Assume	
	 $SC_{2n_1}^{n_1,l_1} \times SC_{k_2}^{n_2,l_2} \times SC_{2n_3}^{n_3,l_3} \subset {\mathbb Z}^{n_1+n_2+n_3}$ such that $k_2\neq 2n_2$.
	 Then it does not have any  $AP_u(2n_1,k_2,2n_3)$-adjacency, $u\in [1,3]_{\mathbb Z}$.
	 For instance, $SC_4^{2,8} \times SC_8^{2,6}\times SC_4^{2,8} \subset {\mathbb Z}^6$
	 does not have any $AP_u(4,8,4)$-adjacency, $u \in [1,3]_{\mathbb Z}$. 
\end{remark}  

After comparing Definitions 7 and 8, based on Remark 4.4, we clearly obtain the following:
\begin{proposition} (1) An $AP_1(k_1, \cdots, k_v)$-adjacency  is independent of an $AP_u(k_1, \cdots, k_v)$-adjacency, $u\neq 1$.\\
	(2) An $AP_1(k_1, k_2)$-adjacency is equivalent to the $C$-compatible $k$-adjacency of Definition 2 \cite{H12}. Besides, an $AP_1^\ast(k_1, k_2)$-adjacency is equivalent to the $C_{k^\ast}$-adjacency of Definition 5. \\
	(3) An $AP_2(k_1,k_2)$-adjacency is equivalent to the normal $k$-adjacency of Definition 1 \cite{H12}. Besides, for two digital images $(X_i,k_i), i \in \{1,2\}$, an $AP_2^\ast(k_1,k_2)$-adjacency on $X_1\times X_2$ implies a normal $k$-adjacency of the product. However, a normal $k$-adjacency need not be an $AP_2^\ast(k_1,k_2)$-adjacency. \\
	(4) For $(X,k_1:=k(t_1,n_1))$ and $(Y,k_2:=k(t_2,n_2))$, in case $(X\times Y, AP_2(k_1,k_2))$ exists, the $(3^{n_1+n_2}-1)$-adjacency need not coincide with the $AP_2(k_1,k_2)$-adjacency. Namely, $AP_2(k_1,k_2)$ need not equal to the number
	$3^{n_1+n_2}-1$.
	\end{proposition}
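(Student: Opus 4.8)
The plan is to obtain all four assertions by unwinding Definition~8 in the cases $u=1$ and $u=2$ (with $v=2$ for parts (2)--(4), and general $v$ for part (1)) and comparing the resulting defining conditions with Definitions~1, 2 and 5, the neighbourhood formulae (2.5)--(2.8), and Proposition~3.2. The proof then splits into a bookkeeping part, parts (2) and (3), in which each $AP_u(k_1,k_2)$-adjacency turns out to be literally one of the previously studied product adjacencies, and an example-producing part, parts (1) and (4).

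For (2), I would specialise Definition~8 to $v=2$, $u=1$: then $p=(x,y)$ and $q=(x',y')$ are $AP_1(k_1,k_2)$-adjacent precisely when they differ in exactly one coordinate-block and are $k_i$-adjacent there, which is verbatim condition (2.7); hence a relation on $X\times Y$ is an $AP_1(k_1,k_2)$-adjacency if and only if it is a $C$-compatible $k$-adjacency (Definition~2). Since, by Definition~8, $AP_1^\ast(k_1,k_2)=\min\{k:(X\times Y,k)\text{ realises }(2.7)\}$, Proposition~3.2 gives at once $AP_1^\ast(k_1,k_2)=C_{k^\ast}$. For (3), I would specialise Definition~8 to $v=2$, $u=2$: the conditions $(\bullet\,1)$--$(\bullet\,2)$ now say that $p$ and $q$ agree outside at most two blocks and are $k_i$-adjacent in each differing block, i.e.\ condition (2.5), so an $AP_2(k_1,k_2)$-adjacency is a normal $k$-adjacency (Definition~1), and $AP_2^\ast(k_1,k_2)$, being one of them, certainly implies a normal $k$-adjacency. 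That the converse fails I would witness by Example~4.3(1): $SC_{18}^{3,6}\times SC_{18}^{3,6}\subset{\mathbb Z}^6$ carries a normal $k$-adjacency for every $k\in\{k(4,6),k(5,6),k(6,6)\}$, so its (normal) $k(6,6)$-adjacency is not the $AP_2^\ast(18,18)$-adjacency, which is $k(4,6)$.

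For (4), I would observe that $3^{n_1+n_2}-1=k(n_1+n_2,n_1+n_2)$ is the top adjacency of ${\mathbb Z}^{n_1+n_2}$, and that by (3) an $AP_2(k_1,k_2)$-adjacency equals it only when the full-product neighbourhood $N_{k_1}(x)\times N_{k_2}(y)$ forces all $n_1+n_2$ coordinates to change by $\pm1$ somewhere, which need not happen. Concretely, by Example~4.3(1) the product $SC_{18}^{3,6}\times SC_{18}^{3,6}$ has $AP_2(18,18)\ni k(4,6)=472\neq728=3^6-1$. For (1), I would first record that the ``at most $1$'' clause of Definition~8 is strictly stronger than the ``at most $u$'' clause for $u\ge2$, so $AP_1$ and $AP_u$ are distinct relations whenever both exist (already $AP_1(2,2)=4\neq8=AP_2(2,2)$ on ${\mathbb Z}^2$, and $AP_1(18,18)\neq AP_2(18,18)$ on $SC_{18}^{3,6}\times SC_{18}^{3,6}$ by Example~4.3(1)); then I would show that existence is independent in both directions. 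In one direction, Remark~4.4(1) supplies a product, e.g.\ $SC_4^{2,8}\times SC_4^{2,8}\times SC_4^{2,8}$, carrying an $AP_1$-adjacency but no $AP_u$-adjacency for $u\in\{2,3\}$. In the other direction, the square $X\times X$ of $X:=\{(0,0),(1,0),(1,1)\}$ equipped with the $8$-adjacency (so that $(X,8)$ is complete on its three points, the same points as $X_1$ in Section~4) carries the normal, hence $AP_2(8,8)=k(4,4)=80$, adjacency but no $AP_1(8,8)$-adjacency, and analogous many-factor products take care of the remaining exponents $u\ge2$.

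I expect the main obstacle to be the ``no spurious neighbour'' verifications underlying part (1) (and, to a lesser extent, the normal-adjacency check for $SC_{18}^{3,6}\times SC_{18}^{3,6}$ used in (3)--(4)). The crux, which I would isolate first as a lemma, is the following dichotomy: when $X\times Y$ carries a normal $k$-adjacency, its cross-profile $(N_{k_1}^\ast(x)\times\{y\})\cup(\{x\}\times N_{k_2}^\ast(y))$ is automatically realised by a standard adjacency of ${\mathbb Z}^{n_1+n_2}$ \emph{unless} some factor contains an edge changing $\ge2$ coordinates while, at some point $p=(x,y)$, both $x$ and $y$ have a $k_i$-neighbour changing exactly one coordinate; such a configuration forces a point changing one coordinate in each of the two blocks, and any adjacency large enough to contain the cross-profile must then wrongly contain that point as well. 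The three-point image $X$ above realises exactly this configuration (two axis-edges and one diagonal edge), so the real work in part (1) is the finite neighbourhood count on the nine points of $X\times X$ showing that it keeps its normal $80$-adjacency while losing every $C$-compatible adjacency.
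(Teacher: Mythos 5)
Your proposal is correct and its logical skeleton is the same as the paper's: parts (2) and (3) are obtained by matching Definition~8 (with $v=2$, $u=1$ or $u=2$) literally against conditions (2.7) and (2.5), and parts (1), (4) and the one-sided implications are settled by counterexamples. The differences are in the examples and in how completely part (1) is handled. For the failure of the converse in (3) and for (4), the paper uses the mixed product $SC_8^{2,6}\times SC_{18}^{3,6}\subset{\mathbb Z}^5$, which has two normal adjacencies $k(4,5)$ and $k(5,5)=242=3^5-1$ but $AP_2^\ast(8,18)=k(4,5)=210$; you instead reuse $SC_{18}^{3,6}\times SC_{18}^{3,6}\subset{\mathbb Z}^6$ from Example~4.3, where $k(6,6)=728=3^6-1$ is a normal adjacency distinct from $AP_2^\ast(18,18)=k(4,6)=472$. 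Both serve equally well. For part (1) you go further than the paper: the paper simply cites Remark~4.4, which only exhibits the direction ``$AP_1$ exists but $AP_u$ ($u\ge 2$) does not'' (item (1) of that remark), and asserts without an explicit witness that neither adjacency implies the other in general. Your three-point image $X=\{(0,0),(1,0),(1,1)\}$ with the $8$-adjacency does supply the missing direction: $X\times X\subset\{0,1\}^4$ is a complete graph under $k(4,4)=80$, so $N_{80}(p)=X\times X=N_8(x)\times N_8(y)$ and the normal ($=AP_2$) adjacency exists, while the cross-profile at $(0,0,0,0)$ would have to contain $(1,1,0,0)$ (forcing $t\ge 2$) yet exclude $(1,0,1,0)$ (forcing $t\le 1$), so no $AP_1(8,8)$-adjacency exists. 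This makes your treatment of (1) a genuine completion of the paper's argument rather than just a variant; the ``dichotomy lemma'' you sketch at the end is more machinery than is needed, but the underlying neighbourhood count is exactly the right verification.
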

To make the paper self-contained, we now prove it briefly.
\begin{proof} (1) Based on Remark 4.4, the proof is completed.
	To be specific, in case $i\neq j, i, j \in  [1, v]_{\mathbb Z}$, none of the $AP_i(k_1,k_2, \cdots, k_v)$-adjacency and $AP_j(k_1,k_2, \cdots, k_v)$-adjacency implies the other, in general.\\	
	(2) In case $(X_1\times X_2, AP_1(k_1,k_2))$ exists, owing to the notions of $AP_1(k_1,k_2)$-adjacency and $C$-compatible $k$-adjacency, we obtain 
	an $AP_1(k_1,k_2)$-adjacency is equivalent to a $C$-compatible $k$-adjacency. 
	Namely, for a point $p:=(x_1,x_2) \in X_1\times X_2$, we obtain the adjacency $k^\ast$ of Definition 5	
	such that
	$$N_{AP_1^\ast(k_1,k_2)}(p)=N_{k^\ast}(p)=N_{C_{k^\ast}}(p), $$
	which completes the proof.\\
	For instance, it is clear that the product $SC_8^{2,4} \times SC_8^{2,4} \subset {\mathbb Z}^4$ has an $AP_1(8,8)$-adjacency, where $AP_1(8,8)\in\{32, 64\}$ and $AP_1^\ast(8,8)=32=C_{32}$.	\\	
	(3) In case $(X_1\times X_2, AP_2(k_1,k_2))$ exists, we obtain a normal $k$-adjacency on $X_1\times X_2$ such that
	 $k=AP_2(k_1,k_2)$.
	  However, the converse need not hold.
	 For instance, while $SC_8^{2,6} \times SC_{18}^{3,6} \subset {\mathbb Z}^5$ has two types of normal $k$-adjacencies, where $k \in \{k(4,5), k(5,5)\}$ and $SC_{18}^{3,6}:=X$ in Example 4.3, it has only the $AP_2^\ast(8,18)$-adjacency, $AP_2^\ast(8,18)=k(4,5)=210$ (see (2.2)).\\
	 (4) Owing to the product $SC_8^{2,6} \times SC_{18}^{3,6} \subset {\mathbb Z}^5$  just mentioned in (3) above, the proof is completed.	
	\end{proof}

\begin{example}
	(1) $SC_8^{2,4} \times SC_8^{2,4} \times SC_8^{2,4} \subset {\mathbb Z}^6$ has the $AP_1^\ast(8,8,8)$-adjacency, where $AP_1^\ast(8,8,8)=k(2,6)$. 
	Besides, it has the $AP_2^\ast(8,8,8)$-adjacency, $AP_2^\ast(8,8,8)=k(4,6)=472$ and it also has $AP_3^\ast(8,8,8)=k(6,6)$.\\
		(2)	$SC_8^{2,6} \times SC_8^{2,6} \times SC_6^{3,6} \subset {\mathbb Z}^7$ does not have any $AP_u(8,8,8)$-adjacency, $u \in [1,3]_{\mathbb Z}$. \\	
	(3) $SC_8^{2,4} \times SC_{8}^{2,4} \subset {\mathbb Z}^4$ has both the $AP_1^\ast(8,8)$-adjacency, where $AP_1^\ast(8,8)=k(2,4)$ and the $AP_2^\ast(8,8)$-adjacency, where $AP_2^\ast(8,8)=k(4,4)$.
\end{example}

Motivated by Remark 4.4(1), the following is obtained.

\begin{lemma} Assume the set ${\mathbb Z}^n, 2\leq n=n_1+n_2+\cdots, n_v, n_i \in {\mathbb N}, i \in [1,v]_{\mathbb Z}$.
	Then ${\mathbb Z}^n$ has an $AP_1(2n_1,2n_2, \cdots, 2n_v)$-adjacency derived from $({\mathbb Z}^{n_i}, 2n_i), i \in [1,v]_{\mathbb Z}$.
	Then we obtain $AP_1(2n_1, 2n_2, \cdots, 2n_v)=G_{2n}=C_{2n}=2n=AP_1^\ast(2n_1,2n_2, \cdots, 2n_v)$.
\end{lemma}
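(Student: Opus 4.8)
The plan is to recognize, on the full space ${\mathbb Z}^n={\mathbb Z}^{n_1}\times\cdots\times{\mathbb Z}^{n_v}$ with each factor carrying its $2n_i=k(1,n_i)$-adjacency, the relation prescribed by Definition 8 with $u=1$ as being literally the $k(1,n)=2n$-adjacency, and then to read off the remaining equalities from the two-factor results already recorded in Sections 3 and 4. The one substantive point is combinatorial: write $p=(x_1,\dots,x_v)$ and $q=(x_1',\dots,x_v')$ with $x_i,x_i'\in{\mathbb Z}^{n_i}$, and recall that $x_i$ is $2n_i$-adjacent to $x_i'$ precisely when exactly one coordinate of $x_i$ differs by $\pm1$ and the rest agree. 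Thus $p,q$ satisfy conditions $(\bullet\,1)$ and $(\bullet\,2)$ of Definition 8 with $u=1$ --- one block $i$ has $x_i$ $2n_i$-adjacent to $x_i'$ while every other block has $x_j=x_j'$ --- if and only if exactly one of the $n$ coordinates of ${\mathbb Z}^n$ differs by $\pm1$ and the others coincide, i.e., if and only if $p$ is $k(1,n)=2n$-adjacent to $q$. Since $2n=k(1,n)$ is one of the adjacencies of (2.1), this already shows that ${\mathbb Z}^n$ carries an $AP_1(2n_1,\dots,2n_v)$-adjacency derived from the $({\mathbb Z}^{n_i},2n_i)$ and that $AP_1(2n_1,\dots,2n_v)=2n$.

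Equivalently, one verifies the neighborhood criterion: for each $p=(x_1,\dots,x_v)\in{\mathbb Z}^n$,
$$N_{2n}(p)=\bigcup_{i=1}^{v}\bigl(\{x_1\}\times\cdots\times N_{2n_i}(x_i)\times\cdots\times\{x_v\}\bigr),$$
which is exactly the iterate of the $C$-compatible pattern (2.8) of Proposition 2.5 --- the $u=1$ instance of Proposition 4.2, the single active index running over $i\in[1,v]_{\mathbb Z}$ with $A_i=N_{2n_i}(x_i)$ and $A_j=\{x_j\}$ for $j\neq i$. The $AP_1(2n_1,\dots,2n_v)$-adjacency is moreover unique here: on the whole space ${\mathbb Z}^n$ the relations $k(t,n)$, $t\in[1,n]_{\mathbb Z}$, are strictly nested and hence pairwise distinct, so $2n=k(1,n)$ is the only adjacency of (2.1) realizing the relation above. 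Consequently $AP_1^\ast(2n_1,\dots,2n_v)=\min\{AP_1(2n_1,\dots,2n_v)\text{-adjacencies}\}=2n=AP_1(2n_1,\dots,2n_v)$.

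For the remaining two identifications: applying the $t_i=1$ case recorded in Remark 3.5 across the factors --- inductively, presenting ${\mathbb Z}^n$ as $({\mathbb Z}^{n_1}\times\cdots\times{\mathbb Z}^{n_{v-1}})\times{\mathbb Z}^{n_v}$, the displayed neighborhood identity being precisely what makes formula (2.8) iterate --- or equivalently using Proposition 4.5(2) to read an $AP_1^\ast$-adjacency as a $C_{k^\ast}$-adjacency factorwise, the $C_{k^\ast}$-adjacency on ${\mathbb Z}^n$ always and uniquely exists and equals $k(1,n)=2(n_1+\cdots+n_v)=2n$. Since $C_{k^\ast}$ exists, Remark 3.4 gives $G_{k^\ast}=k^\ast=C_{k^\ast}$, i.e., $G_{2n}=2n$. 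Collecting the four identifications yields
$$AP_1(2n_1,\dots,2n_v)=G_{2n}=C_{2n}=2n=AP_1^\ast(2n_1,\dots,2n_v),$$
as claimed.

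The step I expect to carry the real weight is the combinatorial identification in the first paragraph: that a single unit step in the product ${\mathbb Z}^{n_1}\times\cdots\times{\mathbb Z}^{n_v}$ is the same as a single unit step in ${\mathbb Z}^n$, that is, that the city-block adjacency $k(1,\cdot)$ is additive under Cartesian products. This additivity is exactly what fails as soon as some $t_i>1$ or $u\geq2$ (compare Remark 4.4, where analogous products may carry no $AP_u$-adjacency at all and, when they do, the value is typically larger), so the argument must use $t_i=1$ for every $i$ and $u=1$ in an essential way. Everything else is bookkeeping --- transcribing Definition 8 and Proposition 4.2, and lifting the two-factor statements of Remark 3.5, Remark 3.4, and Proposition 4.5(2) to $v$ factors by the obvious induction on $v$.
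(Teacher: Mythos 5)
Your proposal is correct and follows exactly the route the paper intends: the paper states this lemma without proof, offering only the pointer to Remark 4.4(1), and your argument supplies the missing details using precisely the surrounding machinery (the additivity of the $k(1,\cdot)$-adjacency under products, Proposition 4.2 with $u=1$, Remark 3.5(1) for the $C_{k^\ast}$-adjacency, and Remark 3.4 for $G_{k^\ast}=C_{k^\ast}$). Your identification of the combinatorial core --- that a single unit step in the product equals a single unit step in ${\mathbb Z}^n$, which is exactly what breaks for $t_i>1$ or $u\geq 2$ --- is the right emphasis and matches the paper's own Remark 4.4.
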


\begin{definition} Given an $AP_u:=AP_u(k_1,k_2, \cdots,  k_v)$-adjacency on the product $\prod_1^v X_i \subset {\mathbb Z}^{n_1+n_2+ \dots +n_v}$,	
 for $p \in \prod_1^v X_i, u \in [1,v]_{\mathbb Z}$, we define
	$$N_{AP_u}^\ast(p):=\{q \in \prod_1^v X_i \,\vert \, q \,\,\text{is}\,\,AP_u\text{-adjacent to}\,\,p\} \eqno(4.4)$$
	and
	$$N_{AP_u}(p):=N_{AP_u}^\ast(p) \cup \{p\},\eqno(4.5)$$
	which is called an $AP_u$-neighborhood of $p$ in  $\prod_1^v X_i$.\\
	Besides,  we define
	$$N_{AP_u^\ast}^\ast(p):=\{q \in \prod_1^v X_i\,\vert \, q \,\,\text{is}\,\,AP_u^\ast\text{-adjacent to}\,\,p\} \eqno(4.6)$$
	and
	$$N_{AP_u^\ast}(p):=N_{AP_u^\ast}^\ast(p) \cup \{p\},\eqno(4.7)$$
	which is called an $AP_u^\ast$-neighborhood of $p$ in  $\prod_1^v X_i$.\\
\end{definition}

In view of (4.4) and (4.6), the following is obtained.
\begin{corollary} (1) In case $(X_1\times X_2, AP_1:=AP_1(k_1,k_2))$ exists,
	  for $p:=(x_1, x_2) \in X_1\times X_2$, we have
	$$N_{AP_1}(p)=(N_{k_1}(x_1) \times \{x_2\}) \cup  (\{x_1\}\times N_{k_2}(x_2))= N_{AP_1^\ast}(p). \eqno(4.8)$$
	
	(2) In case $(X_1\times X_2, AP_2:=AP_2(k_1,k_2))$ exists, we obtain
	 $$N_{AP_2}(p)=N_{k_1}(x_1)\times N_{k_2}(x_2)=N_{AP_2^\ast}(p).$$ 
\end{corollary}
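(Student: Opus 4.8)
The plan is to obtain both identities by pushing the two classical neighborhood characterizations --- formula $(2.6)$ of Proposition 2.3 for a normal $k$-adjacency and formula $(2.8)$ of Proposition 2.5 for a $C$-compatible $k$-adjacency --- through the translation supplied by Proposition 4.5, which identifies an $AP_1(k_1,k_2)$-adjacency with a $C$-compatible $k$-adjacency (and $AP_1^\ast(k_1,k_2)$ with the $C_{k^\ast}$-adjacency of Definition 5) and an $AP_2(k_1,k_2)$-adjacency with a normal $k$-adjacency. Beyond these citations and the definitions of the neighborhoods $N_{AP_u}(p)$ and $N_{AP_u^\ast}(p)$ given in Definition 9, nothing further should be needed.

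First I would prove (1). Fix $p:=(x_1,x_2)\in X_1\times X_2$ and assume $(X_1\times X_2, AP_1(k_1,k_2))$ exists. By Proposition 4.5(2) this $AP_1(k_1,k_2)$-adjacency is a $C$-compatible $k$-adjacency on $X_1\times X_2$ derived from $(X_1,k_1)$ and $(X_2,k_2)$, so Proposition 2.5 applies and
$$N_{AP_1}(p)=N_k(p)=(N_{k_1}(x_1)\times\{x_2\})\cup(\{x_1\}\times N_{k_2}(x_2)).$$
Next, $AP_1^\ast(k_1,k_2)$ is, by Definition 8, the smallest member of the (nonempty) set of $AP_1(k_1,k_2)$-adjacencies of $X_1\times X_2$, hence is itself one of them, hence --- again by Proposition 4.5(2), equivalently by Definition 5 --- a $C$-compatible $k^\ast$-adjacency; applying Proposition 2.5 to it gives $N_{AP_1^\ast}(p)=N_{k^\ast}(p)=(N_{k_1}(x_1)\times\{x_2\})\cup(\{x_1\}\times N_{k_2}(x_2))$. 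Comparing the two displays closes the chain $(4.8)$. (Alternatively, once $AP_1^\ast(k_1,k_2)$ is identified with $C_{k^\ast}$, the last step is immediate from Corollary 3.6.) Note finally that $p$ itself belongs to the right-hand side since $x_i\in N_{k_i}(x_i)$, so adjoining $\{p\}$ in passing from $N_{AP_1}^\ast(p)$ to $N_{AP_1}(p)$, and likewise for the starred version, changes nothing.

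For (2) I would run the same argument with Proposition 4.5(3) in place of 4.5(2). If $(X_1\times X_2, AP_2(k_1,k_2))$ exists, then it is a normal $k$-adjacency, so Proposition 2.3 gives $N_{AP_2}(p)=N_k(p)=N_{k_1}(x_1)\times N_{k_2}(x_2)$; and since $AP_2^\ast(k_1,k_2)$, being the minimum of the set of $AP_2(k_1,k_2)$-adjacencies, is itself one of them, Proposition 4.5(3) makes it a normal $k^\ast$-adjacency, whence Proposition 2.3 also gives $N_{AP_2^\ast}(p)=N_{k_1}(x_1)\times N_{k_2}(x_2)$.

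The one place that deserves care is well-definedness. A product $X_1\times X_2$ can carry several $C$-compatible (resp.\ several normal) $k$-adjacencies for distinct values of $k$, as examples in Sections 3 and 4 illustrate, so the symbols $N_{AP_1}(p)$ and $N_{AP_2}(p)$ look a priori ambiguous; the key observation is that formulas $(2.8)$ and $(2.6)$ express $N_k(p)$ as a $k$-independent function of $N_{k_1}(x_1)$ and $N_{k_2}(x_2)$, so every admissible $k$ yields the same neighborhood --- which is precisely what makes the middle members of $(4.8)$ and of the identity in (2) unambiguous and forces the starred and unstarred neighborhoods to coincide. A minor related point is that Proposition 4.5(3) is only one-directional for the starred object, but we invoke it only in the easy direction, namely that $AP_2^\ast(k_1,k_2)$ is normal, so this causes no difficulty.
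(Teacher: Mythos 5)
Your argument is correct and is essentially the proof the paper intends: the paper states this corollary without proof as immediate "in view of (4.4) and (4.6)," and your route through Proposition 4.5 (identifying $AP_1(k_1,k_2)$ with a $C$-compatible $k$-adjacency and $AP_2(k_1,k_2)$ with a normal $k$-adjacency) combined with the neighborhood formulas (2.8) and (2.6) is exactly the natural completion of that remark. Your added observations --- that $AP_u^\ast(k_1,k_2)$, being the minimum of a nonempty set of admissible adjacencies, is itself one of them, and that (2.6) and (2.8) render $N_{AP_u}(p)$ independent of which admissible $k$ is chosen --- are correct and fill in the only points the paper leaves tacit.
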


	\begin{proposition} Not every $SC_k^{n,l}\times SC_k^{n,l}$ has an $AP_u(k,k)$-adjacency, $u \in \{1,2\}$.\end{proposition}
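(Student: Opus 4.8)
The plan is to prove the proposition by exhibiting one counterexample that simultaneously defeats the cases $u=1$ and $u=2$, organising the argument so that each case is reduced to an obstruction already recorded in the paper. Concretely I would work in ${\mathbb Z}^3$ with $k=18$ and take the simple closed $18$-curve $MSC_{18}$ of (2.9), which, as noted just after (2.9), is an instance of $SC_{18}^{3,6}$; the object under study is the product $MSC_{18}\times MSC_{18}\subset{\mathbb Z}^6$.

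The first step is to translate the $AP_u$-hypotheses into the classical language through Proposition 4.5. By Proposition 4.5(2), an $AP_1(18,18)$-adjacency on $MSC_{18}\times MSC_{18}$ exists if and only if the product carries a $C$-compatible $k$-adjacency derived from $(MSC_{18},18)$ and $(MSC_{18},18)$; by Proposition 4.5(3), an $AP_2(18,18)$-adjacency exists if and only if the product carries a normal $k$-adjacency. Hence it suffices to rule out, for every admissible $k\in\{12,72,232,472,664,728\}$ (see (2.1) and (2.2)), both a normal and a $C$-compatible $k$-adjacency on $MSC_{18}\times MSC_{18}$.

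The second step is to quote the computation already performed in the proof of Theorem 2.6. There one tests the diagonal point $(x_2,x_2)\in MSC_{18}\times MSC_{18}$ against the characterizations (2.6) and (2.8) of Propositions 2.3 and 2.5 and finds, as displayed in (2.10), that $N_k((x_2,x_2))$ coincides with neither $N_{18}(x_2)\times N_{18}(x_2)$ nor $(N_{18}(x_2)\times\{x_2\})\cup(\{x_2\}\times N_{18}(x_2))$ for any of the six candidate values of $k$. Hence neither (2.6) nor (2.8) can hold at every point of the product, so by Propositions 2.3 and 2.5 there is no normal and no $C$-compatible $k$-adjacency on $MSC_{18}\times MSC_{18}$; combining this with the first step, the product has neither an $AP_1(18,18)$- nor an $AP_2(18,18)$-adjacency. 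Since $MSC_{18}$ is a $SC_{18}^{3,6}$, this exhibits, for each $u\in\{1,2\}$, a product $SC_k^{n,l}\times SC_k^{n,l}$ with no $AP_u(k,k)$-adjacency, which is the assertion.

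I do not anticipate a genuine obstacle, since the heavy lifting is already contained in Theorem 2.6; the only point that needs care is the logical direction of Proposition 4.5(2),(3), where I use the safe half of the equivalence, namely that the absence of the classical ($C$-compatible, resp. normal) $k$-adjacency forces the absence of the corresponding $AP_u(k,k)$-adjacency, together with the convention of Definition 8 that $AP_u^\ast$ is meaningful only when the family of $AP_u(k,k)$-adjacencies is nonempty, so that an empty family of candidates simply means no such adjacency exists. For contrast one should also recall Example 4.3, where a different embedding of $SC_{18}^{3,6}$ has a square carrying both $AP_1(18,18)$- and $AP_2(18,18)$-adjacencies, which is precisely why the statement cannot be strengthened from \emph{not every} to \emph{every} such product.
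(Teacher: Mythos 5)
Your proposal is correct and follows essentially the same route as the paper: the paper's own proof also takes $MSC_{18}$ (viewed as an $SC_{18}^{3,6}$) and reduces the claim to Theorem 2.6 via the identification of $AP_1(k,k)$ with the $C$-compatible adjacency and $AP_2(k,k)$ with the normal adjacency (the paper cites Corollary 4.8 where you cite Proposition 4.5, but these encode the same equivalences). Your version merely spells out the computation at the point $(x_2,x_2)$ that the paper leaves implicit.
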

\begin{proof} To invoke a contradiction, consider $MSC_{18}$ of (2.9) which can obviously be assumed to be an $SC_{18}^{3,6}$. Then, by Corollary 4.8, we see that the product $MSC_{18}\times MSC_{18}$ does not have any $AP_u(18,18), u \in \{1,2\}$ (see also Theorem 2.6).
	\end{proof}

\section{\bf Comparison between two types of $DT$-group structures and some improvements}\label{s5}

Based on the literature of a $DT$-group structure, let us now recall two kinds of approaches for formulating a $DT$-group structure on $(X,k)$ mentioned in \cite{H10,LS1} (see Types 1 and 2 below), as follows: \\

(Type 1) Han's $DT$-$k$-group structure \cite{H10} using the $G_{k^\ast}$- or $C_{k^\ast}$-adjacency on $X^2$ in \cite{H10}:\\
Given a digital image $(X,k)$, to establish a $DT$-$k$-group structure, the $G_{k^\ast}$- or $C_{k^\ast}$-adjacency on the product $X^2$ \cite{H10} was considered, i.e., a digital image $(X^2, C_{k^\ast})$ and a digital space $(X^2, G_{k^\ast})$ as relation sets that are both non-reflexive and symmetric \cite{H10} (see Definitions 3 and 5). 
Indeed, they play important roles in establishing a $DT$-$k$-group structure on $(X,k)$ in \cite{H10}.
And that, the ($G_{k^\ast}, k)$- or ($C_{k^\ast},k)$-continuity for the multiplication $\alpha_1$ of (5.1) below was also considered in \cite{H10} (see Definition 5.5 and Corollary 5.7 of \cite{H10} such as Definition 10 below and Remark 5.1) 
$$\alpha_1:(X^2, G_{k^\ast})\to (X, k)\,\, \text{or}\,\,\alpha_1^\prime:(X^2, C_{k^\ast})\to (X, k). \eqno(5.1)$$
It is clear that the map $\alpha_1^\prime$ is a special case of $\alpha_1$ because a $C_{k^\ast}$-adjacency implies a $G_{k^\ast}$-adjacency (see Remark 3.5). 
Namely, while $\alpha_1^\prime$ implies $\alpha_1$, the converse does not hold in general.  \\

After developing an $AP_1$-$k$-group structure of $(X,k)$ in Section 6 later, we will compare the $DT$-$k$-group in \cite{H10} with the $AP_1$-$k$-group (see Theorem 6.9). Hence let us first recall the $DT$-$k$-group structure in \cite{H10}.

\begin{definition} \cite{H10}
	Assume both a digital image $(X,k)$ and a group $(X, \ast)$. 
	We say that the set $(X,k,\ast)$ is a  digital-topological ($DT$-, for brevity) $k$-group if, for $(x,y) \in X^2$, the multiplication
	$$ \alpha_1: (X^2, G_{k^\ast}) \to (X, k)\,\,\text{defined by}\,\,\alpha_1(x, y)= x \ast y\,\,\text{is}\,\,(G_{k^\ast}, k)\text{-continuous} \eqno(5.2)$$
	and the inverse map with respect to the operation $\ast$
	$$\beta: (X,k) \to (X,k)\,\,\text{defined by}\,\,\beta(x)= x^{-1}\,\,\text{is}\,\,k\text{-continuous}. \eqno(5.3)$$
\end{definition}

In Definition 10, for a  $DT$-$k$-group $(X, k, \ast)$, in case the given group  $(X,\ast)$ is abelian, we say that this $DT$-$k$-group is abelian \cite{H10}.
As for the $G_{k^\ast}$-adjacency of $X^2$ in Definition 10, we strongly need to recall the following:

\begin{remark} (1) As for the condition (5.2) of Definition 10, in only the case of $(X^2, C_{k^\ast})$ derived from $(X, k)$, the $C_{k^\ast}$-adjacency
	on $X^2$ implies the $G_{k^\ast}$-adjacency on $X^2$ (see Remark 3.4). Hence, in case $(X^2, C_{k^\ast})$ exists, the phrase $\lq\lq$$(G_{k^\ast}, k)$-continuous" of (5.2) can be replaced by $\lq$$(C_{k^\ast}, k)$-continuous' \cite{H10} (see Proposition 3.8) for another continuity.\\
	(2) Given $(X^2, C_{k^\ast})$ (see Definition 5), the term  $\lq$$(C_{k^\ast}, k)$-continuous' is equivalent to  $\lq$$(k^\ast, k)$-continuous'. Then the $DT$-$k$-group is also established by using the $(C_{k^\ast}, k)$-continuity for the multiplication $\alpha_1^\prime:(X^2, C_{k^\ast}) \to (X,k)$ and the map $\beta$ of (5.3).\\
	(3) Owing to (1) and (2) above, the $DT$-$k$-group using  the $\lq\lq$$(C_{k^\ast}, k)$-continuity" of (5.2) is stronger than the 
	the $DT$-$k$-group using  the $\lq\lq$$(G_{k^\ast}, k)$-continuity".	
\end{remark}

When studying the $DT$-group structure of Definition 10, we should remind the reason why we take the number(subscript) $k^\ast$ of the $G_{k^\ast}$- and $C_{k^\ast}$-adjacency with the same number (see Remarks 3.3, 3.4, and 3.5(2) in the present paper). \\

We should note the usage of the $G_{k^\ast}$- or $C_{k^\ast}$-adjacency on the product $X^2$ as mentioned in Remark 5.1. Namely, by contrast, suppose that we take a $C$-compatible $k^\prime$-adjacency or a normal $k^\prime$-adjacency on the product $X^2$. Then we note that not every  $(SC_k^{n,l}, \ast)$ (see also Remark 2.1) is always a $DT$-$k$-group. 
For instance, assume the digital image $X:=MSC_{18} \subset{\mathbb Z}^3$ \cite{H4} with an $18$-adjacency (see (2.9)), called the minimal simple closed $18$-curve with six elements in ${\mathbb Z}^3$. Indeed, it is not $18$-contractible but $26$-contractible \cite{H4} (see the proof of Theorem 4.6 of \cite{H4}).  
Indeed, $MSC_{18}$ is a kind of $SC_{18}^{3,6}$ (see Section 2). Then it is clear that it cannot be a $DT$-$18$-group.
More precisely, in case we try to establish a  $DT$-$18$-group on $MSC_{18}$ by using a $C$-compatible $k$- or a normal $k$-adjacency on the product $MSC_{18} \times MSC_{18} \subset {\mathbb Z}^6$, we should meet an obstacle of establishing the $C$-compatible $k$-adjacency and the normal $k$-adjacency on $MSC_{18} \times MSC_{18} \subset {\mathbb Z}^6$, $k \in \{12,72,232, 472,664, 728\}$ (see (2.1) and (2.2)) (see also Theorem 2.6). 
Meanwhile,  $MSC_{18} \times MSC_{18}$ has the  $G_{k^\ast}$-adjacency, where $k^\ast=k(2,6)=72$.\\

As mentioned in Theorem 2.6, since not every digital image $(X,k)$ always has a $C$-compatible $k^\prime$-adjacency or a normal $k^\prime$-adjacency on the product $X \times X$, the paper \cite{H10} took the $G_{k^\ast}$-adjacency for the product $X^2$ to finally formulate a $DT$-$k$-group on $(X,k)$.
This is one of the reasons why the paper \cite{H10} took the $G_{k^\ast}$-adjacency on $X^2$. Namely, given $(X,k)$, the paper \cite{H10} took advantage of the $G_{k^\ast}$-adjacency on the product $X^2$ to establish a $DT$-$k$-group on $(X,k)$ (see Proposition 5.2). 
Thus we can confirm that the $G_{k^\ast}$-adjacency on a product gives us lots of benefits to establish a $DT$-$k$-group for any digital image $(X, k)$. 

\begin{proposition} \cite{H10} Given $(X,k)$, a Han's $DT$-$k$-group structure on $(X, k)$ always and uniquely exists.
\end{proposition}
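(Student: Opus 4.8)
The plan is to reduce the assertion to a single fact about the product $X^{2}$: that the adjacency data required by Definition~10 is always available and canonically determined, whereas for the definitions built on a $C$-compatible $k$-adjacency or a normal $k$-adjacency this data may simply fail to exist, by Theorem~2.6. Throughout I would write $k=k(t,n)$, so that $X\subset{\mathbb Z}^{n}$ and $X^{2}\subset{\mathbb Z}^{2n}$.

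First I would recall from Definition~3 that the $GC$-adjacency on $X^{2}$ is obtained from the rule~(2.7) used merely as a sufficient condition (the ``if'' half of Definition~2, not the ``if and only if''). Since nothing has to be checked for it, this relation is defined for every digital image; this is exactly the content of $(\star 1)$ in Section~3. Relabelling it by the number $k^{\ast}:=k(\max\{t,t\},2n)=k(t,2n)$ of~(2.1), as prescribed in~(3.1), produces the relation set $(X^{2},G_{k^{\ast}})$, which is non-reflexive and symmetric, and which is precisely the domain on which the multiplication $\alpha_{1}$ of~(5.1) and the $(G_{k^{\ast}},k)$-continuity of Definition~10 make sense.

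Next I would settle \textbf{existence}: for any $(X,k)$ the integer $k^{\ast}=k(t,2n)$ is a legitimate adjacency number of~(2.1) (since $t\in[1,n]\subseteq[1,2n]$), the relation $G_{k^{\ast}}$ is defined on all of $X^{2}$ by~(3.1), so the relation set $(X^{2},G_{k^{\ast}})$ exists; and whenever $X$ is equipped with a group operation $\ast$, the maps $\alpha_{1}\colon(X^{2},G_{k^{\ast}})\to(X,k)$ and $\beta\colon(X,k)\to(X,k)$ of~(5.2)--(5.3) are then well defined, so the whole apparatus named in Definition~10 is available with no prior hypothesis on the product. For \textbf{uniqueness} I would observe that $(X^{2},G_{k^{\ast}})$ is determined by $(X,k)$ alone: the subscript is forced to be $k^{\ast}=k(t,2n)$ because $\max\{t,t\}=t$, and the relation itself is forced by~(2.7) and~(3.1), so there is no free parameter. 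This contrasts with a $C$-compatible $k'$-adjacency on $X^{2}$, which can admit several admissible values of $k'$ (hence the passage to $C_{k^{\ast}}$ in Remark~3.3) and, more seriously, need not exist at all. Combining the two steps yields the proposition.

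The step I expect to require genuine care, rather than a routine check, is the uniqueness clause: one must ensure that ``the $GC$-adjacency'' carries a canonical numerical label, and this is exactly what the normalisation $t=\max\{t_{1},t_{2}\}$ in~(3.1) provides -- without it the same relation could be named by different adjacency numbers of~(2.1) and uniqueness would break. Everything else (that~(2.7) used as a sufficient condition defines a total, symmetric, non-reflexive relation on $X^{2}$, and that $k(t,2n)$ is a genuine entry of~(2.1)) is immediate from the material recalled in Sections~2 and~3.
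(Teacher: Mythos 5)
The paper itself offers no argument for this proposition: it is quoted from \cite{H10}, and the only justification in the present text is the paragraph preceding it, which invokes Theorem 2.6 together with the observation $(\star 1)$ of Section 3 that the $G_{k^\ast}$-adjacency on $X^2$ always and uniquely exists. Your argument reconstructs exactly that justification, and as far as it goes it is sound: $k^\ast=k(t,2n)$ is a genuine entry of (2.1), and the relation $G_{k^\ast}$ is completely determined by (2.7) and (3.1) with no existence hypothesis on the product and no free parameter, so the relation set $(X^2,G_{k^\ast})$ --- the domain of $\alpha_1$ --- always and uniquely exists, in contrast with the $C$-compatible, normal, and $AP_u(k,k)$-adjacencies.

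However, if the proposition is read literally against Definition 10, there is a gap that your proof inherits (and that the paper does not address). A ``$DT$-$k$-group structure on $(X,k)$'' is a triple $(X,k,\ast)$ satisfying (5.2) and (5.3); its existence requires exhibiting a group operation $\ast$ on $X$ whose multiplication $\alpha_1$ is $(G_{k^\ast},k)$-continuous and whose inversion $\beta$ is $k$-continuous. You explicitly treat $\ast$ as given (``whenever $X$ is equipped with a group operation'') and verify only that $\alpha_1$ and $\beta$ are \emph{well defined}, not that they are continuous; but well-definedness is never in question, continuity is. And continuity is a genuine constraint: since pairs $((x,y),(x',y))$ with $x$ $k$-adjacent to $x'$ are $G_{k^\ast}$-adjacent, $(G_{k^\ast},k)$-continuity of $\alpha_1$ forces every left and right translation to be a $k$-adjacency-preserving bijection of $(X,k)$. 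For a digital $k$-path on three points no group operation has this property, since the translations would form a simply transitive group of order $3$ inside the automorphism group of a $3$-point path, which has order $2$. So your argument establishes existence and uniqueness of the adjacency apparatus $(X^2,G_{k^\ast})$ --- which is evidently what the paper intends, echoing the wording of $(\star 1)$ --- but it does not, and cannot, prove the literal statement that a structure in the sense of Definition 10 exists for every $(X,k)$; likewise ``uniqueness'' cannot refer to the group operation, since for a digital image with no $k$-adjacent pairs of points every group operation satisfies (5.2)--(5.3) vacuously. You should state explicitly which reading you are proving; under the strong reading the missing step is the construction of $\ast$, and that step is in general impossible.
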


(Type 2)  The $NP_i$-$DT$-groups \cite{LS1}, $i \in \{1,2\}$, using the $NP_i(k,k)$-adjacency on $X^2$  discussed in \cite{B1}:\\
Given $(X,k)$, $NP_i$-$DT$-groups, $ i \in \{1,2\}$, were discussed in \cite{LS1} by using the $NP_i(k,k)$-adjacency for the product $X^2$ in \cite{B1} that is a special case of the $NP_u(k_1,k_2, \cdots, k_v)$-adjacency in \cite{B1} (see Definition 7 in the present paper), $u \in [1,v]_{\mathbb Z}$.
Besides, the $(NP_i(k,k),k)$-continuities of a multiplication $\alpha_2$ (see (5.4) below) for the establishment of the $NP_i$-$DT$-groups were considered in \cite{LS1}, $i \in \{1,2\}$, as follows:
$$\alpha_2:(X^2, NP_i(k,k))\to (X, k),\,\,i \in \{1,2\}. \eqno(5.4)$$

In addition, the $NP_u(k_1,k_2, \cdots, k_v)$-adjacency, $u \in [1,v]_{\mathbb Z}$, has been used for examining the product property of the two $NP_i$-$DT$-groups in \cite{LS1}, $i \in \{1,2\}$.\\

Indeed, the $NP_u(k_1,k_2, \cdots, k_v)$-adjacency in \cite{B1} was motivated by both a $C$-compatible $k$-adjacency in \cite{H5}  and a normal $k$-adjacency established in \cite{H2}. 
However, as mentioned in Remark 4.1, it turns out that the  $NP_u(k_1,k_2, \cdots, k_v)$-adjacency in \cite{B1,B2} has some unclearness.
Due to the unclearness of the $NP_u(k_1,k_2, \cdots, k_v)$-adjacency in \cite{B1,B2} (see Remark 4.1), given a digital image $(X, k)$, 
we may consider another approach to establish a $DT$-group (see Section 6) using Proposition 4.2. \\

\section{\bf Development of an $AP_1$-$k$- and  an $AP_1^\ast$-$k$-group structure and non-product property of $AP_1$-$k_i$-groups, $i \in \{1,2\}$}\label{s6}

This section initially develops both an $AP_1$-$k$- and an $AP_1^\ast$-$k$-group structure on a digital image $(X, k)$ as digital $k$-groups. In
particular, it proves the non-product property of the $AP_1$-$k_i$-groups, $i \in \{1,2\}$.
Based on Definition 8 and Proposition 4.2, we define the following:

\begin{definition} (1) Assume $(\prod_1^v X_i, AP_u(k_1, \cdots, k_v))$ derived from the $v$ multiple digital images $(X_i, k_i:=k(t_i, n_i))$,  
	$X_i \subset {\mathbb Z}^{n_i},  t_i\in [1,n_i]_{\mathbb Z}, i \in [1, v]_{\mathbb Z}$. 
	For  $u \in [1, v]_{\mathbb Z}$, we say that the map  
	$f: (\prod_1^v X_i, AP_u(k_1, \cdots, k_v)) \to (Y,k)$
	 is  $(AP_u(k_1, \cdots, k_v), k)$-continuous if, for two points $x$ and $x^\prime$ which are $AP_u(k_1, \cdots, k_v)$-adjacent on $\prod_1^v X_i$, $f(x)=f(x^\prime)$ or $f(x)$ is $k$-adjacent to $f(x^\prime)$.\\
	In particular, in case $k_i=k_j, i,j \in [1,v]_{\mathbb Z}$, we say that the map $f$ above is  $(AP_u(k_1,k_1, \cdots, k_1), k)$-continuous.
	
	(2) In (1) above, in case we replace the term $\lq\lq$$AP_u(k_1, \cdots, k_v)$" with $\lq$$AP_u^\ast(k_1, \cdots, k_v)$',
	we define the $(AP_u^\ast(k_1, \cdots, k_v), k)$- and $(AP_u^\ast(k_1,k_1, \cdots, k_1), k)$-continuity of the given map $f$.	
\end{definition}

The two continuities of Definition 11 can be represented as follows:
\begin{proposition} For  $u \in [1, v]_{\mathbb Z}$, the following are obtained.\\
	(1)	A function  $f: (\prod_1^v X_i, AP_u(k_1, \cdots, k_v)) \to (Y,k)$ is  $(AP_u(k_1, \cdots, k_v), k)$-continuous at a point $p \in \prod_1^v X_i, u \in [1,v]_{\mathbb Z}$ if and only if
	$$f(N_{AP_u(k_1, \cdots, k_v)}(p)) \subset N_{k}(f(p)).\eqno(6.1)$$
	A map $f:(\prod_1^v X_i, AP_u(k_1, \cdots, k_v) \to (Y, k)$ is $(AP_u(k_1, \cdots, k_v)), k)$-continuous at each point $p \in \prod_1^v X_i$, then the map $f$ is $(AP_u(k_1, \cdots, k_v), k)$-continuous.\\
	(2) In (1) above, after replacing the term $\lq\lq$$N_{AP_u(k_1, \cdots, k_v)}(p)$" by $\lq$$N_{AP_u^\ast(k_1, \cdots, k_v)}(p)$',
	we obtain the $(AP_u^\ast(k_1, \cdots, k_v), k)$- and $(AP_u^\ast(k_1,k_1, \cdots, k_1), k)$-continuity of the given map $f$.	
\end{proposition}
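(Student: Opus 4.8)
The plan is to reduce Proposition 6.2 to the standard Rosenfeld-type characterization of digital continuity recorded in (2.4), mirroring the proofs of Propositions 3.7 and 3.9. The key preliminary observation, which I would state first, is that by Definition 8 together with Proposition 4.2 (via (4.3)) an $AP_u(k_1,\cdots,k_v)$-adjacency is genuinely one of the $k$-adjacencies of ${\mathbb Z}^{n_1+n_2+\cdots+n_v}$ listed in (2.1); writing $k':=AP_u(k_1,\cdots,k_v)$, the $(AP_u(k_1,\cdots,k_v),k)$-continuity of Definition 11 is therefore nothing but ordinary $(k',k)$-continuity of $f:(\prod_1^v X_i,k')\to(Y,k)$, and $N_{AP_u(k_1,\cdots,k_v)}(p)$ of (4.4)--(4.5) coincides with the digital $k'$-neighborhood $N_{k'}(p)$ of (2.3). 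To keep Section 6 self-contained, however, I would re-run the short two-direction argument directly from Definition 11 rather than merely quoting (2.4).

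For the forward direction of (1), I would assume $f$ is $(AP_u(k_1,\cdots,k_v),k)$-continuous and fix $p\in\prod_1^v X_i$. By (4.4)--(4.5), $N_{AP_u(k_1,\cdots,k_v)}(p)=N_{AP_u(k_1,\cdots,k_v)}^\ast(p)\cup\{p\}$; for any $q\in N_{AP_u(k_1,\cdots,k_v)}^\ast(p)$ the points $q$ and $p$ are $AP_u(k_1,\cdots,k_v)$-adjacent, so Definition 11 gives $f(q)=f(p)$ or $f(q)$ $k$-adjacent to $f(p)$, hence $f(q)\in N_k(f(p))$ in either case; since trivially $f(p)\in N_k(f(p))$, we get (6.1). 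For the converse, assuming (6.1) at every point, any $AP_u(k_1,\cdots,k_v)$-adjacent pair $x,x'$ satisfies $x'\in N_{AP_u(k_1,\cdots,k_v)}^\ast(x)\subset N_{AP_u(k_1,\cdots,k_v)}(x)$, so $f(x')\in f(N_{AP_u(k_1,\cdots,k_v)}(x))\subset N_k(f(x))$, i.e.\ $f(x')=f(x)$ or they are $k$-adjacent; this is exactly $(AP_u(k_1,\cdots,k_v),k)$-continuity of $f$, and the same computation shows that continuity at every point is equivalent to global continuity, which disposes of the second sentence of (1). For part (2) I would simply note that, by Definition 8, $AP_u^\ast(k_1,\cdots,k_v)$ is again a single, well-defined adjacency of (2.1) (the minimal $AP_u$-adjacency), so the neighborhoods $N_{AP_u^\ast(k_1,\cdots,k_v)}^\ast(p)$ and $N_{AP_u^\ast(k_1,\cdots,k_v)}(p)$ of (4.6)--(4.7) behave precisely as those above; replacing $AP_u$ by $AP_u^\ast$ verbatim in the two arguments yields the $(AP_u^\ast(k_1,\cdots,k_v),k)$- and $(AP_u^\ast(k_1,k_1,\cdots,k_1),k)$-versions.

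The argument is routine and there is no genuine obstacle; the one point that deserves explicit care is the compatibility of notation just mentioned, namely that $N_{AP_u(k_1,\cdots,k_v)}^\ast(p)$ really is the set of points $k'$-adjacent to $p$ for the digital adjacency $k'=AP_u(k_1,\cdots,k_v)$, so that the phrase ``$f(x)$ is $k$-adjacent to $f(x')$'' in Definition 11 and the $k$-neighborhood $N_k(f(p))$ of (2.3) refer to one and the same relation. This is exactly what Definition 8 and Proposition 4.2 guarantee, and it is precisely the feature that makes the $AP_1$-$k$- and $AP_1^\ast$-$k$-setting behave like a genuine digital image, in contrast with the $G_{k^\ast}$-case where $G_{k^\ast}$ need not be a digital adjacency of (2.1) at all.
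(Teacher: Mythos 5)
Your proof is correct: the paper states this proposition without any proof at all, treating it as the standard Rosenfeld-style equivalence already recorded in (2.4) and in the analogous Propositions 3.7 and 3.9 quoted from \cite{H10}, and your two-direction argument is exactly the routine verification the paper leaves implicit. Your preliminary observation that $AP_u(k_1,\cdots,k_v)$ is a genuine adjacency of (2.1), so that $N_{AP_u(k_1,\cdots,k_v)}(p)$ is an ordinary digital neighborhood, correctly identifies the one fact (stated after Proposition 4.2) on which the reduction rests.
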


Using the $AP_1(k,k)$-adjacency of $X^2$ derived from $(X, k)$ and the $(AP_1(k,k), k)$-continuity, as a special case of  Definition 11, we obtain the following:

\begin{corollary}
(1)	We say that the map  $f: (X_1 \times X_2, AP_1(k_1,k_2)) \to (Y,k)$ is  $(AP_1(k_1,k_2), k)$-continuous if, for two points $x$ and $x^\prime$ which are $AP_1(k_1,k_2)$-adjacent in $X_1\times X_2$, $f(x)=f(x^\prime)$ or $f(x)$ is $k$-adjacent to $f(x^\prime)$.\\
	In particular, in case $(X_1,k_1)=(X_2,k_2)$, we say that the map $f$ above is  $(AP_1(k_1,k_1), k)$-continuous.\\
	(2) In (1) above, after replacing the term $\lq\lq$$AP_u(k_1,k_2)$" by $\lq$$AP_u^\ast(k_1,k_2)$',
	we define the $(AP_u^\ast(k_1, k_2), k)$- and $(AP_u^\ast(k_1,k_2), k)$-continuity of the given map $f$.
	\end{corollary}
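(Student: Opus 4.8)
The plan is to obtain the statement directly as the special case $v=2$, $u=1$ of Definition~11 together with Proposition~6.1, so that essentially no argument beyond substitution is required. First I would put $v=2$ in Definition~11(1): then $\prod_1^v X_i = X_1\times X_2$ and, taking $u=1$, the adjacency $AP_u(k_1,\cdots,k_v)$ becomes the $AP_1(k_1,k_2)$-adjacency on the product, so the pointwise condition of Definition~11(1) reads exactly as in part~(1) of the corollary: $f(x)=f(x^\prime)$, or $f(x)$ is $k$-adjacent to $f(x^\prime)$, whenever $x$ and $x^\prime$ are $AP_1(k_1,k_2)$-adjacent in $X_1\times X_2$. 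The convention $AP_1(k_1,k_1)$ in the case $(X_1,k_1)=(X_2,k_2)$ is the one fixed after Proposition~4.2, which yields the $(AP_1(k_1,k_1),k)$-continuity asserted in (1). For part~(2) I would invoke Definition~11(2) with the same pair $(v,u)=(2,1)$, replacing $AP_u(k_1,\cdots,k_v)$ by $AP_u^\ast(k_1,\cdots,k_v)$ throughout; this is well posed precisely when the $AP_1(k_1,k_2)$-adjacency exists, since by Proposition~4.5(2) that happens exactly when $X_1\times X_2$ carries a $C$-compatible $k$-adjacency, and then $AP_1^\ast(k_1,k_2)$ is the $C_{k^\ast}$-adjacency of Definition~5.

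Next I would record the equivalent neighborhood form by specializing Proposition~6.1 (its formula (6.1)) to $(v,u)=(2,1)$: the map $f$ is $(AP_1(k_1,k_2),k)$-continuous if and only if $f(N_{AP_1(k_1,k_2)}(p))\subset N_k(f(p))$ for every $p\in X_1\times X_2$, and likewise with $AP_1^\ast$ in place of $AP_1$. Combining this with Corollary~4.8(1), namely $N_{AP_1(k_1,k_2)}(p)=(N_{k_1}(x_1)\times\{x_2\})\cup(\{x_1\}\times N_{k_2}(x_2))=N_{AP_1^\ast(k_1,k_2)}(p)$ for $p=(x_1,x_2)$, shows in particular that the $(AP_1(k_1,k_2),k)$- and $(AP_1^\ast(k_1,k_2),k)$-continuity of $f$ coincide whenever the $AP_1(k_1,k_2)$-adjacency is defined; this is the one genuinely informative consequence worth stating in the proof, and it is consistent with Proposition~4.5(2).

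There is no real obstacle: Definition~11 and Proposition~6.1 are already formulated for arbitrary $v$ and $u\in[1,v]_{\mathbb Z}$, and $(v,u)=(2,1)$ is admissible, so the work is pure instantiation. The only points to watch are keeping the notation uniform between the $AP_1$ and $AP_1^\ast$ versions, and not conflating $AP_1(k_1,k_2)$---which may range over several admissible values of (2.1)---with its minimum $AP_1^\ast(k_1,k_2)$, even though Corollary~4.8(1) shows that the induced neighborhoods, hence the induced continuities, are the same.
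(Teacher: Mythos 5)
Your proposal matches the paper's treatment exactly: the paper gives no separate proof of Corollary 6.2 but introduces it as the special case $v=2$, $u=1$ of Definition 11 (with Proposition 6.1 supplying the neighborhood reformulation), which is precisely your instantiation. The extra observation via Corollary 4.8(1) that the $AP_1$- and $AP_1^\ast$-continuities induce the same condition is a correct and harmless addition, consistent with the paper's later use of $C_{k^\ast}=AP_1^\ast(k,k)$.
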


Based on Corollary 6.2, the following is defined to establish an $AP_1$-$k$-group and an $AP_1^\ast$-$k$-group on a digital image $(X,k)$.
 
\begin{definition} (1)
Assume both a digital image $(X,k)$ and a group $(X, \ast)$. 
We say that the set $(X,k,\ast)$ is an $AP_1$-$k$-group if, for $(x,y) \in X^2$,
 the multiplication	
	$$\left \{
	\aligned
	& \alpha_3: (X^2, AP_1(k,k)) \to (X, k)\,\,\text{defined by}\,\,\alpha_3(x, y)= x \ast y\,\,\\
	&\text{is}\,\,(AP_1(k,k), k)\text{-continuous}
		\endaligned
	\right\}\eqno(6.2)  $$	
	
	and the inverse map with respect to the operation $\ast$
	$$\beta: (X,k) \to (X,k)\,\,\text{defined by}\,\,\beta(x)= x^{-1}\,\,\text{is}\,\,k\text{-continuous}, \,\,\,\,\,\,\,\,\,\,\,\eqno(6.3)$$
	where $x^{-1}$ indicates the inverse element of $x$ under the group $(X, \ast)$.\\
	(2) In (1) above, after replacing the term $\lq\lq$$AP_1(k,k)$" by  $\lq$$AP_1^\ast(k,k)$' we define an $AP_1^\ast$-$k$-group on $(X,k)$ and the group $(X,\ast)$.
\end{definition}

In view of Definition 12, it is clear that an $AP_1^\ast$-$k$-group is a special case of an $AP_1$-$k$-group.
In Definition 12, for an $AP_1$-$k$-group $(X, k, \ast)$, in case the given group  $(X,\ast)$ is abelian, we say that this $AP_1$-$k$-group is abelian.\\

Unlike the $DT$-groups with Types 1 and 2 mentioned earlier in Section 5, using Definition 12, we have established the $AP_1$-$k$-group and $AP_1^\ast$-$k$-group.
Compared to the two types of $DT$-groups in \cite{H10,LS1} mentioned in Section 5, Definition 12 uses the $AP_1(k,k)$-adjacency (resp. $AP_1^\ast(k,k)$-adjacency) on $X^2$ and the $(AP_u(k,k), k)$-continuity (resp. $(AP_u^\ast(k,k), k)$-continuity) of the multiplication $\alpha_3$ (resp. $\alpha_3^\prime$) of (6.4).
Namely, based on both the $AP_1(k,k)$- and $AP_1^\ast(k,k)$-adjacency on $X^2$ (see Definition 8), we use the maps (see Definition 12)
$$\alpha_3:(X^2, AP_1(k,k))\to (X, k)\,\,\text{or}\,\,\alpha_3^\prime:(X^2, AP_1^\ast(k,k))\to (X, k). \eqno(6.4)$$
Then we observe that the map $\alpha_3^\prime$ of (6.4) is equivalent to the map $\alpha_1^\prime$ of (5.1).
Namely, comparing among Definitions 3, 5, and 8, in case $(X^2, C_{k^\ast})$ exists, based on Remark 5.1,
$$\text{since}\,\,C_{k^\ast}=AP_1^\ast(k,k),\,\,\text{we obtain the equality}\,\, \alpha_1^\prime=\alpha_3^\prime, \eqno(6.5)$$
where $\alpha_1^\prime$  is referred to in (5.1).
For instance, consider the product $SC_8^{2,4} \times SC_8^{2,4} \subset {\mathbb Z}^4$. Then it has the $AP_1(8,8)$-adjacency and  $AP_1^\ast(8,8)$-adjacency, where $AP_1(8,8)\in \{k(2,4)=32, k(3,4)=64\}$ and $AP_1^\ast(8,8)=k(2,4)=32=C_{32}$ (in detail,  see Remark 3.3). Thus we have  $\alpha_1^\prime= \alpha_3^\prime$.\\

 Meanwhile, even though we follow the map $\alpha_3$ of (6.4) for the multiplication for an $AP_1$-$k$-group, we prove that the product property of an $AP_1$-$k$-group does not hold in general (see Theorem 6.7). \\
Meanwhile, unlike the map $\alpha_3$ of (6.4), if we suppose the map $\alpha_4$ as a multiplication
$$\alpha_4:(X^2, AP_2(k,k))\to (X, k), \eqno(6.6)$$
in a way similar to (6.4),
then we prove that $\alpha_4$ cannot be a multiplication for establishing an $AP_2$-$k$-group (see Remark 6.11)). 
Thus we finally confirm some utilities of the $G_{k^\ast}$-, $C_{k^\ast}$-adjacency, and $AP_1(k,k)$-adjacency 
for a product $X^2$ for establishing a $DT$-group  and an $AP_1$-$k$-group derived from $(X, k)$ (see Proposition 5.2 and Definition 12).

\begin{theorem} $({\mathbb Z}^n, k, +)$ is an abelian $AP_1$-$k$-group, where the $k$-adjacency is anyone of (2.1).
\end{theorem}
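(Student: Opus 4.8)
The plan is to verify directly that the three structural conditions making $(\mathbb{Z}^n, k, +)$ an $AP_1$-$k$-group are satisfied: (i) $(\mathbb{Z}^n, k)$ is a digital image for every $k$-adjacency $k = k(t,n)$ of (2.1), (ii) $(\mathbb{Z}^n, +)$ is an abelian group, and (iii) the multiplication $\alpha_3$ of (6.2) is $(AP_1(k,k),k)$-continuous and the inversion $\beta$ of (6.3) is $k$-continuous. Items (i) and (ii) are immediate: $\mathbb{Z}^n$ equipped with any $k(t,n)$-adjacency is a digital image by the convention fixed in Section 2, and $(\mathbb{Z}^n,+)$ is the standard abelian group with $\beta(x) = -x$. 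So the work is all in (iii).

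The first genuine step is to identify the $AP_1(k,k)$-adjacency on $(\mathbb{Z}^n)^2 = \mathbb{Z}^{2n}$. Here I would invoke the existence part of the machinery: by Lemma 4.9 (with $v=2$, $n_1=n_2=n$) one knows that when $k = 2n$ the product $\mathbb{Z}^n \times \mathbb{Z}^n$ carries an $AP_1(2n,2n)$-adjacency; but more generally, for an arbitrary $k = k(t,n)$, one must check that $\mathbb{Z}^{2n}$ does carry an $AP_1(k,k)$-adjacency derived from $(\mathbb{Z}^n,k)$ and $(\mathbb{Z}^n,k)$. For this I would appeal to Proposition 4.2: it suffices to exhibit a $k'$-adjacency of $\mathbb{Z}^{2n}$ (one of the adjacencies of (2.1)) such that for every $p = (x,y) \in \mathbb{Z}^{2n}$ the neighborhood $N_{k'}(p)$ decomposes as $(N_k(x)\times\{y\}) \cup (\{x\}\times N_k(y))$, matching (4.3) with $u=1$. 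Because $\mathbb{Z}^n$ is homogeneous and $N_k(x)$ has the same combinatorial shape at every point, translation-invariance makes this decomposition hold simultaneously at all points, so the required $k'$-adjacency exists and is an $AP_1(k,k)$-adjacency; by Corollary 4.8(1) its $AP_1$-neighborhood of $p$ is exactly $(N_k(x)\times\{y\}) \cup (\{x\}\times N_k(y))$.

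The second step is the continuity of $\alpha_3(x,y) = x+y$. By Proposition 6.1(1) it is enough to show $\alpha_3(N_{AP_1(k,k)}(p)) \subseteq N_k(\alpha_3(p))$ for every $p=(x,y)$. Using the description of $N_{AP_1(k,k)}(p)$ from the previous step, a point of this neighborhood is either $(x',y)$ with $x' \in N_k(x)$, or $(x,y')$ with $y'\in N_k(y)$, or $p$ itself. In the first case $\alpha_3(x',y) = x'+y$; since $x'$ equals $x$ or is $k$-adjacent to $x$, translation by $y$ (an isometry of $\mathbb{Z}^n$ preserving the $k$-adjacency, as it merely shifts coordinates by integers) sends this to $x+y$ or a $k$-neighbor of $x+y$, i.e. into $N_k(x+y)$. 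The second case is symmetric, and the third is trivial. Hence $\alpha_3$ is $(AP_1(k,k),k)$-continuous. For $\beta(x) = -x$: if $x$ and $x'$ are $k$-adjacent, their coordinates differ by $\pm 1$ in at most $t$ places, and negating every coordinate preserves this, so $-x$ and $-x'$ are $k$-adjacent; by (2.4), $\beta$ is $k$-continuous. This completes all three conditions, and abelianness of $(\mathbb{Z}^n,+)$ gives the ``abelian'' in the statement.

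The main obstacle I anticipate is the first step — pinning down that $\mathbb{Z}^{2n}$ actually admits an $AP_1(k,k)$-adjacency for a general $k = k(t,n)$, not just $k=2n$. One has to find the correct value $k' = k(t',2n)$ whose $k'$-neighborhood of each point factors as the union $(N_k(x)\times\{y\})\cup(\{x\}\times N_k(y))$, and to confirm this is a bona fide adjacency of the list (2.1) rather than merely a relation (contrast the $G_{k^\ast}$-situation). Once the homogeneity of $\mathbb{Z}^n$ is exploited, though, the decomposition is uniform across all points, so Proposition 4.2 applies cleanly; the remaining continuity verifications are then routine translations-are-isometries arguments as sketched above.
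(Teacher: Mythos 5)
Your overall route is the same as the paper's: check that $(\mathbb{Z}^n,+)$ is an abelian group, that $\alpha_3(x,y)=x+y$ is $(AP_1(k,k),k)$-continuous via the neighborhood criterion of Proposition 6.1, and that $\beta(x)=-x$ is $k$-continuous. The paper simply asserts the two continuities as clear, whereas you supply the translation-invariance and negation arguments; those parts are correct. The gap is exactly at the step you yourself single out as the main obstacle: the existence of an $AP_1(k,k)$-adjacency on $\mathbb{Z}^n\times\mathbb{Z}^n=\mathbb{Z}^{2n}$ for an arbitrary $k=k(t,n)$. Your appeal to homogeneity only shows that \emph{if} the decomposition $N_{k'}(p)=(N_k(x)\times\{y\})\cup(\{x\}\times N_k(y))$ holds at one point then it holds at every point; it does not produce a $k'=k(t',2n)$ from the list (2.1) for which it holds at all. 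In fact no such $k'$ exists once $t\geq 2$: the point $(x',y)$ with $x'$ differing from $x$ in exactly $t$ coordinates must be $k'$-adjacent to $p$, which forces $t'\geq t\geq 2$; but then the point obtained from $p$ by changing one coordinate in each of the two blocks by $\pm1$ is also $k'$-adjacent to $p$ while lying in neither $N_k(x)\times\{y\}$ nor $\{x\}\times N_k(y)$, which forces $t'\leq 1$. (Equivalently, for $n=2$ and $k=8$, the set $(N_8(x)\times\{y\})\cup(\{x\}\times N_8(y))$ has $17$ elements, which is not of the form $k(t',4)+1$ for any $t'\in[1,4]_{\mathbb{Z}}$.) So an $AP_1(k,k)$-adjacency on all of $\mathbb{Z}^{2n}$ exists only when $t=1$, i.e., $k=2n$, which is precisely the case already covered by Lemma 4.7.

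Consequently your argument only establishes the statement for $k=2n$; for the other adjacencies of (2.1) the object $(X^2,AP_1(k,k))$ required by Definition 12 does not exist, so the continuity condition cannot even be formulated. You should be aware that the paper's own proof has the same defect --- it writes ``take a point $q\in N_{AP_1(k,k)}(p)$'' without verifying that this neighborhood is defined --- so you have not missed an idea that the paper supplies; rather, the existence step needs either the restriction $k=2n$ or an explicit hypothesis that $(\mathbb{Z}^{2n},AP_1(k,k))$ exists. Granting that hypothesis, the rest of your proof (translation by $y$ preserves ``differ by $\pm1$ in at most $t$ coordinates,'' hence $\alpha_3$ is continuous; negation preserves it, hence $\beta$ is continuous) is complete and slightly more detailed than the paper's.
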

\begin{proof} (1) First of all, $({\mathbb Z}^n, +)$ is clearly a group \cite{Fr1}.
	Second, it is clear that the multiplication 
	$$\left \{
	\aligned
	& \alpha_3: ({\mathbb Z}^n \times {\mathbb Z}^n, AP_1(k,k)) \to ({\mathbb Z}^n, k)\,\,\text{defined by}\,\,\alpha_3(x, y)= x + y\,\,\\
	&\text{is}\,\,(AP_1(k,k), k)\text{-continuous}.
	\endaligned
	\right\}\eqno(6.7)  $$	
	To be specific, for each point $p \in {\mathbb Z}^{2n}$,  take a point $q \in N_{AP_1(k,k)}(p)$. Then it is clear that $\alpha_3(p)$ is $k$-adjacent to $\alpha_3(q)$.\\
				Next, it is obvious that, for each point $p \in  {\mathbb Z}^{n}$,
				the inverse map $\beta:({\mathbb Z}^n,k) \to ({\mathbb Z}^n,k)$ is $k$-continuous, where $\beta(p)=-p \in {\mathbb Z}^n$.
			\end{proof}

\begin{example}(1) $({\mathbb Z}^2, 4, +)$ is an $AP_1$-$4$-group that is equal to the $AP_1^\ast$-$4$-group.\\
	(2) $({\mathbb Z}^2, 8, +)$ is an $AP_1$-$8$-group that is equal to the $AP_1^\ast$-$8$-group.\\
\end{example}

\begin{remark}
	Not every $SC_k^{n,l}$ always has an $AP_1$-$k$-group structure. To be specific, see Proposition 4.9.
	For instance, since the product  $MSC_{18}\times MSC_{18}$ does not have any $AP_u(18,18)$-adjacency, $u \in \{1,2\}$, we cannot establish a multiplication $\alpha_3: MSC_{18}\times MSC_{18} \to MSC_{18}$ for an $(AP_u(18,18), 18)$-continuity of the multiplication.
\end{remark}

\begin{corollary} In case $SC_k^{n,l}\times SC_k^{n,l}$ has an $AP_1(k,k)$-adjacency, $SC_k^{n,l}$ can be an $AP_1$-$k$-group and an $AP_1^\ast$-$k$-group. 
\end{corollary}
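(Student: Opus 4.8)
The plan is to take the explicit group operation on $SC_k^{n,l}$ furnished by Remark 2.1 and verify the two requirements of Definition 12 directly, the only non-trivial input being the description of the $AP_1(k,k)$-neighbourhood supplied by Corollary 4.8(1). Write $SC_k^{n,l}=(x_m)_{m\in[0,l-1]_{\mathbb Z}}$; by Remark 2.1, $(SC_k^{n,l},\ast)$ is an abelian group with $x_i\ast x_j=x_{i+j}$ and inverse $\beta(x_i)=x_i^{-1}=x_{-i}$, all indices read modulo $l$. By the definition of a simple closed $k$-curve, $N_k(x_i)=\{x_{i-1},x_i,x_{i+1}\}$ (indices mod $l$), and these three points are pairwise distinct since $l\ge 4$. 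Because the hypothesis says $SC_k^{n,l}\times SC_k^{n,l}$ carries an $AP_1(k,k)$-adjacency, Corollary 4.8(1), applied with $(X_1,k_1)=(X_2,k_2)=(SC_k^{n,l},k)$, gives for every $p=(x_i,x_j)$
$$N_{AP_1(k,k)}(p)=\big(N_k(x_i)\times\{x_j\}\big)\cup\big(\{x_i\}\times N_k(x_j)\big)=N_{AP_1^\ast(k,k)}(p).$$

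Next I would check $(AP_1(k,k),k)$-continuity of $\alpha_3(x_i,x_j)=x_{i+j}$ through Proposition 6.1(1), i.e.\ by showing $\alpha_3\big(N_{AP_1(k,k)}(p)\big)\subset N_k(\alpha_3(p))$ for every $p$. For $q=(x_a,x_j)$ with $a\in\{i-1,i,i+1\}$ one has $\alpha_3(q)=x_{a+j}$ with $a+j\in\{i+j-1,\,i+j,\,i+j+1\}$ modulo $l$, so $\alpha_3(q)$ is equal to or $k$-adjacent to $x_{i+j}=\alpha_3(p)$; the case $q=(x_i,x_b)$ with $b\in\{j-1,j,j+1\}$ is symmetric, and together these exhaust $N_{AP_1(k,k)}(p)$. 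Hence $\alpha_3$ is $(AP_1(k,k),k)$-continuous. For the inverse map, by (2.4) it suffices to note $\beta\big(N_k(x_i)\big)=\{x_{-i+1},x_{-i},x_{-i-1}\}=N_k(x_{-i})=N_k(\beta(x_i))$, so $\beta$ is $k$-continuous. Thus $(SC_k^{n,l},k,\ast)$ satisfies Definition 12 and is an $AP_1$-$k$-group; moreover, by the displayed equality $N_{AP_1^\ast(k,k)}(p)=N_{AP_1(k,k)}(p)$, the very same computation gives that $\alpha_3^\prime$ is $(AP_1^\ast(k,k),k)$-continuous, so $(SC_k^{n,l},k,\ast)$ is also an $AP_1^\ast$-$k$-group.

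I do not expect a genuine obstacle: the argument is the $SC_k^{n,l}$-analogue of the proof that $({\mathbb Z}^n,k,+)$ is an $AP_1$-$k$-group, with the only real care being the arithmetic of indices modulo $l$, together with the observation that $l\ge 4$ keeps the three candidate images distinct so that whether $\alpha_3(q)$ is equal to or merely $k$-adjacent to $\alpha_3(p)$ is unambiguous. The one point that has to be asserted rather than computed is that the hypothesis — existence of an $AP_1(k,k)$-adjacency on $SC_k^{n,l}\times SC_k^{n,l}$ — is exactly what permits invoking Corollary 4.8(1); when it fails, as for $MSC_{18}$ (cf.\ Proposition 4.9 and Remark 6.5), the multiplication $\alpha_3$ has no domain adjacency with respect to which continuity could be phrased, which is precisely why the statement is only conditional.
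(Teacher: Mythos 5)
Your proof is correct and follows the same route as the paper: the group structure is taken from Remark 2.1, and the only obstruction to the $AP_1$-$k$-group structure is the existence of an $AP_1(k,k)$-adjacency on the product, which the hypothesis supplies. The paper's own proof essentially stops at that observation, whereas you additionally carry out the verification it leaves implicit — using Corollary 4.8(1) to identify $N_{AP_1(k,k)}(p)=N_{AP_1^\ast(k,k)}(p)$ and Proposition 6.1 to check that $\alpha_3(x_i,x_j)=x_{i+j \;(\mathrm{mod}\; l)}$ is $(AP_1(k,k),k)$-continuous and that $\beta(x_i)=x_{-i}$ is $k$-continuous — and this index computation modulo $l$ is accurate and settles both the $AP_1$-$k$- and the $AP_1^\ast$-$k$-group claims simultaneously.
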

\begin{proof} To establish both an  $AP_1$-$k$-group and an $AP_1^\ast$-$k$-group, the existence of an $AP_1(k,k)$-adjacency is strongly required on $X^2$. To be specific, by Remark 2.1,  for $SC_k^{n,l}$, even though $(SC_k^{n,l}, \ast)$ is a group, the product need not have an $AP_1(k,k)$-adjacency (see Proposition 4.9), which completes the proof.
	\end{proof}

	\begin{theorem}
An $AP_1^\ast$-$k$-group does not have the product property.
	Namely, assume an $AP_1^\ast$-$k_1$-group $(X, k_1, \ast_1), X \subset {\mathbb Z}^{n_1}$ and an $AP_1^\ast$-$k_2$-group $(Y, k_2, \ast_2), Y\subset {\mathbb Z}^{n_2}$. 
	Then the product $X\times Y$ need not be an $AP_1^\ast$-$k$-group $(X \times Y, k, \ast_3)$.
\end{theorem}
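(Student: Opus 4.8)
The plan is to exhibit an explicit counterexample rather than argue abstractly, since the failure of the product property is really a failure of the underlying adjacency $AP_1^\ast$ to behave multiplicatively. First I would choose the smallest nontrivial building block for which $AP_1^\ast(k,k)$ is well-understood, namely $(\mathbb{Z}, 2)$, which is an $AP_1^\ast$-$2$-group by Theorem 6.3 (with $n=1$), so that both factors $(X, k_1, \ast_1) = (Y, k_2, \ast_2) = (\mathbb{Z}, 2, +)$ are available. The point of taking $\mathbb{Z}$ in $\mathbb{Z}^1$ is that $AP_1^\ast(2,2)$ on $\mathbb{Z}^2$ is the $4$-adjacency, by Lemma 4.8 (or Remark 4.4(1) with $v=2$, $n_1=n_2=1$): $AP_1^\ast(2,2)=C_4=4$.

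Next I would form the candidate product $X\times Y = \mathbb{Z}^2$, which sits inside $\mathbb{Z}^{n_1+n_2}=\mathbb{Z}^2$, and ask whether there is a $k$-adjacency on $\mathbb{Z}^2$ — one of the adjacencies of (2.1), so $k\in\{4,8\}$ — together with a group operation $\ast_3$ making $(\mathbb{Z}^2, k, \ast_3)$ an $AP_1^\ast$-$k$-group whose multiplication restricts appropriately. The key observation is that for the product to be an $AP_1^\ast$-$k$-group \emph{derived from the two given factors}, the adjacency on $(X\times Y)^2 = \mathbb{Z}^4$ used for the multiplication $\alpha_3'$ would have to be $AP_1^\ast(k,k)$ for whatever $k$ is chosen on $\mathbb{Z}^2$; but more to the point, one wants the product structure to be compatible with the factor structures in the obvious coordinatewise way, $\ast_3 = \ast_1\times\ast_2 = +$, and then $k$ on $\mathbb{Z}^2$ should be $AP_1^\ast(k_1,k_2)=AP_1^\ast(2,2)=4$. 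So the real question becomes: is $(\mathbb{Z}^2, 4, +)$, viewed as the product of the two $AP_1^\ast$-$2$-groups, an $AP_1^\ast$-$k$-group \emph{in the sense that the product's own $AP_1^\ast$-adjacency exists and matches}? I would show the obstruction lies in the second-level product: $(\mathbb{Z}^2)^2 = \mathbb{Z}^4$ must carry an $AP_1^\ast(4,4)$-adjacency built from $(\mathbb{Z}^2,4)$ twice, and I would compute that $AP_1^\ast(4,4)$ on $\mathbb{Z}^4$ equals $k(2,4)=32$, while the $C$-compatible structure one \emph{needs} for a coherent product (so that the projections are continuous and $\alpha_3$ factors) forces an incompatible adjacency — concretely, the issue is that $AP_1(4,4)\in\{k(2,4),k(3,4)\}$ and the normal-type neighborhoods arising from the product of two $4$-adjacencies do not coincide with $N_{AP_1^\ast(4,4)}$, by Corollary 4.8 and a direct check at the point $((0,0),(0,0))\in\mathbb{Z}^4$.

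The cleanest route, which I would actually carry out, is to imitate the $MSC_{18}$-style argument: pick factors $(X,k_1),(Y,k_2)$ that are $AP_1^\ast$-groups (e.g.\ both equal to $(\mathbb{Z},2,+)$, or to $(SC_8^{2,4}, 8, \ast)$ using Corollary 6.6 since $SC_8^{2,4}\times SC_8^{2,4}$ has an $AP_1(8,8)$-adjacency), form $X\times Y$, and then show $(X\times Y)^2 = X\times Y\times X\times Y$ fails to have an $AP_1^\ast(k,k)$-adjacency for the relevant $k$, or has one that cannot support a $(AP_1^\ast(k,k),k)$-continuous multiplication $\alpha_3'$ agreeing with $\ast_1\times\ast_2$. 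For the $SC_8^{2,4}$ choice this is attractive because by Example 4.6(3) or the discussion after Proposition 4.7 one can locate where $AP_u^\ast$ adjacencies on fourfold products of $SC_8^{2,4}$ break down, and by Proposition 4.7(2)–(3) a normal-$k$ adjacency need not be an $AP_2^\ast$-adjacency, so the higher product loses the structure. I would verify at a specific point $p$ (the identity $((s_0,s_0),(s_0,s_0))$) that no $k$-adjacency neighborhood $N_k(p)$ on $\mathbb{Z}^8$ has the product form required by Proposition 4.2 for $AP_1(k,k)$ derived from $(X\times Y, k')$.

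The main obstacle I anticipate is pinning down precisely which adjacency $k$ and which operation $\ast_3$ on $X\times Y$ one is even \emph{allowed} to consider: the statement says "need not be an $AP_1^\ast$-$k$-group", so to get a genuine counterexample I must rule out \emph{all} admissible $(k,\ast_3)$, not merely the coordinatewise one. I would handle this by noting that $\ast_3$ must, at minimum, make $X\times Y$ a group containing $X\times\{e_Y\}$ and $\{e_X\}\times Y$ as subgroups with the product order, so up to the digital embedding $\ast_3$ is forced to be $\ast_1\times\ast_2$ (or differ by an automorphism that does not change the adjacency-theoretic obstruction), and then the computation that $(X\times Y)^2$ lacks the required $AP_1^\ast$-adjacency — exactly as in Theorem 2.6 and Proposition 4.9 for $MSC_{18}$ — closes the argument. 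The delicate bookkeeping is the neighborhood comparison $N_k(p)\neq A_1\times\cdots\times A_4$ of the form (4.3) at the chosen point, which I would present as a single explicit inequality of finite sets, in the spirit of (2.10).
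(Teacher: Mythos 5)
Your overall strategy---exhibit two $AP_1^\ast$-groups whose product $X\times Y$ fails because $(X\times Y)^2$ does not carry the required $AP_1^\ast$-adjacency---is exactly the shape of the paper's argument, but both of the concrete candidates you propose fail to be counterexamples, and the computation underpinning the first one is wrong. Taking both factors equal to $({\mathbb Z},2,+)$ gives the product $({\mathbb Z}^2,4,+)$, which \emph{is} an $AP_1^\ast$-$4$-group: by Lemma 4.8 the set ${\mathbb Z}^4$ has $AP_1^\ast(4,4)=2\cdot 4=k(1,4)=8$ (not $k(2,4)=32$ as you compute---you have imported the value for $SC_8^{2,4}\times SC_8^{2,4}$, where the factors have $t=2$, into a situation where $t_1=t_2=1$), and Theorem 6.4 together with Example 6.2(1) then shows $({\mathbb Z}^2,4,+)$ is an $AP_1$-$4$-group equal to the $AP_1^\ast$-$4$-group. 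There is no ``obstruction at the second-level product'' here. Your alternative choice, taking all factors equal to $SC_8^{2,4}$, also does not produce an obstruction: Example 4.6(1) and 4.6(3) show that twofold and threefold products of $SC_8^{2,4}$ admit $AP_1^\ast$-adjacencies ($k(2,4)$ and $k(2,6)$ respectively), and the same pattern persists for the fourfold product in ${\mathbb Z}^8$, since all factors share the same parameter $t=2$.

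The missing idea is that the failure is driven by \emph{mixing adjacency types}, i.e., factors with different values of $t$ in $k=k(t,n)$, as recorded in Remark 4.4(3) and Example 4.6(2). The paper's proof takes the $AP_1^\ast$-$4$-group $(SC_4^{2,4},\ast_1)$ (a $2n$-type curve, $t=1$) and the $AP_1^\ast$-$8$-group $(SC_8^{2,4},\ast_2)$ ($t=2$), forms the naturally induced group on $SC_4^{2,4}\times SC_8^{2,4}\subset{\mathbb Z}^4$, and observes that the fourfold product $(SC_4^{2,4}\times SC_8^{2,4})\times(SC_4^{2,4}\times SC_8^{2,4})\subset{\mathbb Z}^8$ has no $AP_1^\ast(4,8,4,8)$-adjacency, so no multiplication $\alpha_3^\prime$ in the sense of Definition 12 can even be formulated. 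Your closing concern about ruling out all admissible pairs $(k,\ast_3)$ is more than the paper attempts (it works with the naturally induced product group only), but it is moot until you have a pair of factors for which the adjacency actually fails to exist; with homogeneous factors it does exist and your argument cannot get started.
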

\begin{proof} As an example, consider the $AP_1^\ast$-$4$-group $(SC_4^{2,4}, \ast_1)$ and  the $AP_1^\ast$-$8$-group $(SC_8^{2,4}, \ast_2)$ (see Corollary 6.6). Then consider the group 
	$(SC_4^{2,4}\times SC_8^{2,4}, \ast_3) \subset {\mathbb Z}^4$ induced by the given two groups in a natural way \cite{Fr1}. Then it is clear that the product $(SC_4^{2,4} \times SC_8^{2,4}) \times (SC_4^{2,4} \times SC_8^{2,4})\subset {\mathbb Z}^8$ does not have $AP_1^\ast(4,8,4,8)$-adjacency so that we cannot have a $DT$-group structure on $SC_4^{2,4} \times SC_8^{2,4}$ associated with Definition 12.	
	\end{proof}

In view of Theorem 6.7, for two $AP_1$-$k_i$-groups, $i \in \{1,2\}$, the product $X_1 \times X_2$ need not have an $AP_1$-$k$-group for some $k$-adjacency for $X_1 \times X_2$.
\begin{corollary} An $AP_1$-$k$-group does not have the product property.
	\end{corollary}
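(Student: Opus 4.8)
The plan is to recycle the counterexample from the proof of Theorem 6.7 and to sharpen its conclusion from $AP_1^\ast$ to $AP_1$. By the remark just after Definition 12, every $AP_1^\ast$-$k$-group is in particular an $AP_1$-$k$-group, so the two groups $(SC_4^{2,4},4,\ast_1)$ and $(SC_8^{2,4},8,\ast_2)$ used there --- which are $AP_1^\ast$-groups by Corollary 6.6, since $SC_4^{2,4}\times SC_4^{2,4}$ and $SC_8^{2,4}\times SC_8^{2,4}$ each carry an $AP_1(k,k)$-adjacency (see Remarks 3.3 and 3.5(1)) --- are also, respectively, an $AP_1$-$4$-group and an $AP_1$-$8$-group. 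It then suffices to show that the product group $G:=(SC_4^{2,4}\times SC_8^{2,4},\ast_3)\subset{\mathbb Z}^4$ admits no $AP_1$-$k$-group structure for any $k$-adjacency of (2.1).

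First I would observe that Theorem 6.7 already delivers slightly more than it states: it shows that $G^2=SC_4^{2,4}\times SC_8^{2,4}\times SC_4^{2,4}\times SC_8^{2,4}\subset{\mathbb Z}^8$ has no $AP_1^\ast(4,8,4,8)$-adjacency, and since by Definition 8 an $AP_1^\ast$-adjacency is merely the minimum of the $AP_1(4,8,4,8)$-adjacencies of $G^2$, the absence of the former is equivalent to the absence of \emph{any} $AP_1(4,8,4,8)$-adjacency on $G^2$. Next I would connect this with Definition 12: if $(G,k,\ast_3)$ were an $AP_1$-$k$-group, then the digital image $(G^2,AP_1(k,k))$ with $G^2=G\times G$ would have to exist, which by Proposition 4.2 and Corollary 4.8(1) means that for every $p=(g_1,g_2)\in G^2$ the set $N_k(p)\subset{\mathbb Z}^8$ equals $(N_k(g_1)\times\{g_2\})\cup(\{g_1\}\times N_k(g_2))$. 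I would then rerun the neighborhood computation behind Theorem 6.7, in the spirit of (2.10) and Remark 4.4(3): choosing $p$ so that the two $SC_8^{2,4}$-factors --- whose adjacency $8$ is not of the $2n$-type --- are active, no $k$-neighborhood of ${\mathbb Z}^8$ from (2.1) can be written in the required product form, whatever admissible adjacency $k$ one puts on $G$. Hence $G^2$ has no $AP_1(k,k)$-adjacency, $G$ has no $AP_1$-$k$-group structure, and, together with the first paragraph, the product property fails; the statement for arbitrary $AP_1$-$k_i$-groups then follows.

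The step I expect to be the main obstacle is this last neighborhood analysis. Theorem 6.7 disposes only of the minimal adjacency $AP_1^\ast(4,8,4,8)$, whereas here one must exclude every candidate $k(t,8)$, $t\in[1,8]_{\mathbb Z}$, that could serve as the adjacency of $G^2$, and do so uniformly over the admissible adjacencies $k$ on $G$ itself. A minor point, quickly settled, is that $G=SC_4^{2,4}\times SC_8^{2,4}$ need not carry a canonical ($C$-compatible or normal) product adjacency; this is harmless, because Definition 12 only requires $(G,k)$ to be a digital image for \emph{some} $k$-adjacency of (2.1), and the computation above rules out all of them at once.
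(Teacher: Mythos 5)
Your proposal is correct and follows essentially the same route as the paper, which derives this corollary directly from the counterexample $(SC_4^{2,4},4,\ast_1)$, $(SC_8^{2,4},8,\ast_2)$ of Theorem 6.7 together with the fact that an $AP_1^\ast$-$k$-group is a special case of an $AP_1$-$k$-group. Your added observations --- that the nonexistence of the minimal $AP_1^\ast$-adjacency already forces the nonexistence of every $AP_1$-adjacency, and that one must rule out all candidate $k(t,8)$-adjacencies on the product --- only make explicit what the paper leaves implicit in the sentence preceding the corollary.
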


\begin{remark} $({\mathbb Z}^2, 4, +)$ is not an $AP_2$-$4$-group.\\
\end{remark}
\begin{proof} Let us suppose the multiplication $\alpha_4: ({\mathbb Z}^2 \times {\mathbb Z}^2, AP_2(4,4)) 
	\to ({\mathbb Z}^2, 4)$. Then consider the two points $(0,0,0,0)$ and $(1,0,1,0)$ which are $AP_2(4,4)$-adjacent to each other. Then the images of $\alpha_4$ of them are $\alpha_4((0,0,0,0))=(0,0)$ and 
	 $\alpha_4((1,0,1,0))=(2,0)$ which are not $4$-adjacent. 
 \end{proof}

\begin{theorem}
(1)	 An $AP_1$-$k$-group is a generalization of a Han's $DT$-$k$-group structure based on both the $C_{k^\ast}$-adjacency on the product $X^2$ and the $(C_{k^\ast}, k)$-continuity for the multiplication $\alpha_1^\prime:(X^2, C_{k^\ast}) \to (X,k)$. \\
(2) An $AP_1^\ast$-$k$-group is equivalent to a Han's $DT$-$k$-group structure based on both the $C_{k^\ast}$-adjacency on the product $X^2$ and the $(C_{k^\ast}, k)$-continuity for the multiplication $\alpha_1^\prime:(X^2, C_{k^\ast}) \to (X,k)$.
\end{theorem}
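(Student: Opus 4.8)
The plan is to reduce the whole statement to two facts already available: the adjacency identifications of Proposition 4.5(2), and the agreement of the relevant point neighborhoods. Recall from Proposition 4.5(2) that, on the product $X^2$, an $AP_1(k,k)$-adjacency is precisely a $C$-compatible $k'$-adjacency of Definition 2, while an $AP_1^\ast(k,k)$-adjacency is precisely the $C_{k^\ast}$-adjacency of Definition 5. Combining this with Corollary 4.8(1), Proposition 2.5 and Corollary 3.6, one gets that, whenever any of these adjacencies exists on $X^2$, every point $p:=(x,y)$ has the single common neighborhood $N_{AP_1(k,k)}(p)=N_{AP_1^\ast(k,k)}(p)=N_{C_{k^\ast}}(p)=(N_k(x)\times\{y\})\cup(\{x\}\times N_k(y))$. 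So the continuity hypothesis in Definition 12 and the variant of Definition 10 in which the multiplication is required to be $(C_{k^\ast},k)$-continuous (Remark 5.1(2)) are statements about the same relation sets and the same point neighborhoods, differing only in the subscript attached to the adjacency, and the inverse-map clauses (6.3) and (5.3) are literally the same. This is the engine of both parts.

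To prove (2) I would first assume that $(X,k,\ast)$ is a Han $DT$-$k$-group of the $C_{k^\ast}$-type, so in particular $X^2$ carries a $C_{k^\ast}$-adjacency; by Proposition 4.5(2) this is exactly the $AP_1^\ast(k,k)$-adjacency, hence $(X^2,AP_1^\ast(k,k))$ and $(X^2,C_{k^\ast})$ coincide as relation sets and, by (6.5), $\alpha_3^\prime=\alpha_1^\prime$. By Proposition 6.1(2) the $(AP_1^\ast(k,k),k)$-continuity of $\alpha_3^\prime$ is the condition $\alpha_3^\prime(N_{AP_1^\ast(k,k)}(p))\subset N_k(\alpha_3^\prime(p))$ for every $p$, which by the neighborhood coincidence above reads $\alpha_1^\prime(N_{C_{k^\ast}}(p))\subset N_k(\alpha_1^\prime(p))$ for every $p$, i.e. the $(C_{k^\ast},k)$-continuity of $\alpha_1^\prime$ by Proposition 3.8; and clause (6.3) is clause (5.3). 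Every link in this chain is an equivalence, so reading it backwards gives the converse, and an $AP_1^\ast$-$k$-group and a Han $C_{k^\ast}$-$DT$-$k$-group are one and the same notion.

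For (1) I would run only one direction of the argument in (2). From a Han $C_{k^\ast}$-$DT$-$k$-group $(X,k,\ast)$, the existence of its $C_{k^\ast}$-adjacency on $X^2$ supplies, via Proposition 4.5(2), an $AP_1(k,k)$-adjacency on $X^2$ (namely the minimal one, $C_{k^\ast}=AP_1^\ast(k,k)$); with $\alpha_3=\alpha_1^\prime$, the $(C_{k^\ast},k)$-continuity transports, through the common neighborhood and Proposition 6.1, to $(AP_1(k,k),k)$-continuity of $\alpha_3$, and together with (6.3) this exhibits $(X,k,\ast)$ as an $AP_1$-$k$-group. To support the word ``generalization'' I would add the remark that an $AP_1(k,k)$-adjacency of $X^2$ need not be the canonical $C_{k^\ast}=AP_1^\ast(k,k)$ one --- for instance $SC_8^{2,4}\times SC_8^{2,4}\subset{\mathbb Z}^4$ admits $AP_1(8,8)\in\{k(2,4),k(3,4)\}$ while $C_{k^\ast}=32=k(2,4)$ --- so Definition 12(1) is framed over the whole family of admissible $AP_1(k,k)$-adjacencies of $X^2$, of which Han's $C_{k^\ast}$-construction is the distinguished member, and by part (2) it coincides with that member when the minimal adjacency is selected.

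The step I expect to be delicate is not a computation but the bookkeeping that legitimizes transporting continuity between adjacencies carrying different subscripts: one must argue the equivalence of $(AP_1(k,k),k)$-, $(AP_1^\ast(k,k),k)$- and $(C_{k^\ast},k)$-continuity strictly through their neighborhood characterizations (Proposition 6.1, Proposition 3.8) and through the coincidence $N_{AP_1(k,k)}(p)=N_{AP_1^\ast(k,k)}(p)=N_{C_{k^\ast}}(p)$ (Corollary 4.8(1), Proposition 2.5, Corollary 3.6), never through the numbers $AP_1(k,k)$ and $k^\ast$ themselves, while also checking that the group axioms on $(X,\ast)$ and the $k$-continuity of $\beta$ are literally inherited on both sides. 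A secondary point needing care is the exact phrasing of (1), which should read ``every Han $C_{k^\ast}$-$DT$-$k$-group is an $AP_1$-$k$-group, the $AP_1$-$k$-group being defined over the possibly larger family of $AP_1(k,k)$-adjacencies of $X^2$'', so that it is a genuine generalization and stays consistent with the equivalence in (2).
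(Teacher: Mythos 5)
Your proposal is correct and follows essentially the same route as the paper: both parts rest on the identification $C_{k^\ast}=AP_1^\ast(k,k)$ from Proposition 4.5(2) and (6.5), together with the observation that an $AP_1^\ast$-$k$-group is a special case of an $AP_1$-$k$-group. The paper's own proof is far terser --- it simply cites this identification, Remark 5.1, and Definition 12(2) --- whereas you spell out the neighborhood-level transport of continuity via Corollary 4.8(1) and Propositions 3.8 and 6.1, which is exactly the bookkeeping the paper leaves implicit.
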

\begin{proof} (1) Since a Han's $DT$-$k$-group structure based on both the $C_{k^\ast}$-adjacency on the product $X^2$ and the $(C_{k^\ast}, k)$-continuity for the multiplication $\alpha_1^\prime:(X^2, C_{k^\ast}) \to (X,k)$ is exactly equivalent to the 
	$AP_1^\ast$-$k$-group because  $C_{k^\ast}=AP_1^\ast(k,k)$, which completes the  proof.  \\
	(2) By Remark 5.1 and Definition 12(2), the proof is completed.	
	\end{proof}

\begin{remark} For a digital image $(X,k)$, it need not be an $AP_2$-$k$-group.
	\end{remark}
\begin{proof} For instance, $({\mathbb Z}, 2)$ is not an $AP_2$-$2$-group.
	To be specific, take the two points $p:=(0,0)$ and $q:=(1,1)$ in ${\mathbb Z}^2$.
	While they are $AP_2(2,2)$-adjacency, the images of the multiplication $\alpha_4(p)=0$ and $\alpha_4(q)=2$ are not $2$-adjacent in $({\mathbb Z}, 2)$.
	\end{proof}

	\section{\bf Summary}\label{s7}
	
	After using the $AP_u(k_1, \cdots, k_v)$- and  $AP_u^\ast(k_1, \cdots, k_v)$-adjacency,
	the paper developed the $AP_u$-$k$- and $AP_u^\ast$-$k$-group.
	Besides, after comparing among two types of $DT$-groups with Types 1 and 2 referred to in Section 5,
	we investigated some distinctions among them. 
	Finally, we confirmed that an $AP_1^\ast$-$k$-group is equivalent to a Han's $DT$-$k$-group based on both the $C_{k^\ast}$-adjacency on the product $X^2$ and the $(C_{k^\ast}, k)$-continuity for the multiplication $\alpha_1^\prime:(X^2, C_{k^\ast}) \to (X,k)$. Besides, we obverved that the $DT$-$k$-group \cite{H10} based on both the $G_{k^\ast}$-adjacency on the product $X^2$ and the $(G_{k^\ast}, k)$-continuity for the multiplication for a $DT$-$k$-group is the most generalized $DT$-$k$-group in the literature of a $DT$-group structure on a digital image $(X,k)$.

\bigskip
	{\bf Funding}: 
	National Research Foundation of~Korea funded by the Ministry of Education, Science and Technology (2019R1I1A3A03059103)
	
	\bigskip
	
	{\bf Conflicts of Interest}: The author declares no conflict of interest.
	
	\bigskip
	
	{\bf Data availability}: The author confirms that the data supporting the findings of this study are available within this article. 
	\\
	
	\newpage

\end{document}